\numberwithin{equation}{section}
\numberwithin{figure}{section}
\theoremstyle{plain}
\newtheorem{thm}{\protect\theoremname}[section]
\theoremstyle{definition}
\newtheorem{defn}[thm]{\protect\definitionname}
\theoremstyle{definition}
\newtheorem{example}[thm]{\protect\examplename}
\theoremstyle{plain}
\newtheorem{fact}[thm]{\protect\factname}
\theoremstyle{plain}
\newtheorem{lem}[thm]{\protect\lemmaname}
\theoremstyle{plain}
\newtheorem{cor}[thm]{\protect\corollaryname}
\providecommand{\corollaryname}{Corollary}
\providecommand{\definitionname}{Definition}
\providecommand{\examplename}{Example}
\providecommand{\factname}{Fact}
\providecommand{\lemmaname}{Lemma}
\providecommand{\theoremname}{Theorem}
\begin{document}
\title{An Analogue to Infinitery Hales-Jewett Theorem}
\author{ANINDA CHAKRABORTY AND SAYAN GOSWAMI}
\email{anindachakraborty2@gmail.com}
\address{Government General Degree College at Chapra, Chapra.}
\email{sayan92m@gmail.com}
\address{Department of Mathematics, University of Kalyani, Kalyani.}
\begin{abstract}
In a recent work, N. Hindman, D. Strauss and L. Zamboni have shown
that the Hales-Jewett theorem can be combined with a sufficiently
well behaved homomorphisms. In this paper we will show that those
combined extensions can be made if we replace the alphabet by an increasing
sequence of alphabets, infact it holds for some Ramsey theoretic small
sets. To obtained this we achieved some interesting cofigurations. 
\end{abstract}

\maketitle

\section{Preliminaaries}

Let $\omega=\mathbb{N}\cup\left\{ 0\right\} $, where $\mathbb{N}$
is the set of positive integers. Then $\omega$ is the first infinite
ordinal. Let $\mathcal{P}_{f}\left(X\right)$ be the set of all finite
subsets of a set $X$.

Given a nonempty set $\mathbb{A}$ (or alphabet) we let $w\left(\mathbb{A}\right)$
be the set of all finite words $w=a_{1}a_{2}\ldots a_{n}$ with $n\geq1$
and $a_{i}\in\mathbb{A}$. The quantity $n$ is called the length
of $w$ and denoted $\left|w\right|$. The set $w\left(\mathbb{A}\right)$
is naturally semigroup under the operation of concatenation of words.
We will denote the empty word by $\theta.$ For each $u\in w\left(\mathbb{A}\right)$
and $a\in\mathbb{A}$, we let $\left|u\right|_{a}$ be the number
of occurrences of $a$ in $u$, we will identify the elements of $\mathbb{A}$
with the length-one words over $\mathbb{A}$.

Let $v$ (a variable) be a letter not belonging to $\mathbb{A}$.
By a variable word over $\mathbb{A}$ we mean a word $w$ over $\mathbb{A}\cup\left\{ v\right\} $
with $\left|w\right|_{v}\geq1$. We let $S_{1}\left(\mathbb{A}\right)$
be the set of one-variable words over $\mathbb{A}$. If $w\in S_{1}\left(\mathbb{A}\right)$
and $a\in\mathbb{A}$, then $w\left(a\right)\in w\left(\mathbb{A}\right)$
is the result of replacing each occurrence of $v$ by $a$.

A finite coloring of a set $A$ is a function from $A$ to a finite
set $\left\{ 1,2,\ldots,n\right\} $. A subset $B$ of $A$ is monochromatic
if the function is constant on $B$. If $\mathbb{A}$ be any finite
nonempty set and $S$ be the free semigroup of all words over the
alphabet $\mathbb{A}$, then the Hales-Jewett Theorem states that
for any finite coloring of the $S$ there is a variable word over
$\mathbb{A}$ all of whose instances are the same color.
\begin{thm}
\cite{key-3} Assume that $\mathbb{A}$ is finite. For each finite
coloring of $S_{0}$ there exists a variable word $w$ such that $\left\{ w\left(a\right):a\in\mathbb{A}\right\} $
is monochromatic.
\end{thm}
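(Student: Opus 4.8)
The plan is to translate the semigroup statement into its finitary geometric form and prove that by induction on the size of the alphabet. First I would identify the length-$N$ words of $S_{0}=w(\mathbb{A})$ with the cube $\mathbb{A}^{N}$, and note that a one-variable word $w$ of length $N$ whose variable $v$ occupies the position set $V\subseteq\{1,\dots,N\}$, with fixed letters elsewhere, has instance set $\{w(a):a\in\mathbb{A}\}$ equal to a \emph{combinatorial line}: the $|\mathbb{A}|$ points of $\mathbb{A}^{N}$ that agree off $V$ and are constant on $V$. Thus it suffices to prove the finitary claim that for each finite $\mathbb{A}$ and each $r$ there is an $N=N(|\mathbb{A}|,r)$ so that every $r$-colouring of $\mathbb{A}^{N}$ admits a monochromatic combinatorial line. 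Granting this, the theorem follows at once: restrict the given finite colouring of $S_{0}$ to the length-$N$ words $\mathbb{A}^{N}$, and the monochromatic combinatorial line produced there is precisely the instance set $\{w(a):a\in\mathbb{A}\}$ of a length-$N$ one-variable word $w$.

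I would prove the finitary claim by induction on $t=|\mathbb{A}|$. The base case $t=1$ is immediate, since the single-letter variable word $v$ already gives a one-point monochromatic line. For the inductive step I fix $\mathbb{A}=\{a_{1},\dots,a_{t}\}$, set $\mathbb{A}'=\{a_{1},\dots,a_{t-1}\}$ of size $t-1$, and run the classical \emph{colour-focusing} construction. The aim is to build a family of combinatorial lines $L_{1},\dots,L_{m}$ over the full alphabet $\mathbb{A}$ that are all \emph{focused} at one common point $f$ --- meaning that setting the variable of each $L_{i}$ to the top letter $a_{t}$ lands exactly on $f$ --- and such that the restriction of each $L_{i}$ to variable-values in $\mathbb{A}'$ is monochromatic, the colours $c_{1},\dots,c_{m}$ of these restrictions being pairwise distinct.

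The family is grown one line at a time. To add a line I would open a fresh block of coordinates whose length is an instance of the induction hypothesis for the smaller alphabet $\mathbb{A}'$, but against an inflated palette that records, for every possible completion of the remaining coordinates, the colour the finished word would receive; applying the hypothesis for $\mathbb{A}'$ in that block yields an $\mathbb{A}'$-line on which this rich colouring is constant. Gluing it to the current configuration so that its $a_{t}$-instance becomes the updated common focus either closes off a fully monochromatic line over $\mathbb{A}$ (if the new $\mathbb{A}'$-colour coincides with the colour already present at the focus) or enlarges the focused family by one genuinely new colour. Since there are only $r$ colours, the family cannot keep acquiring new ones; at saturation the focus point $f$ itself must carry a colour equal to some $c_{i}$, and then $L_{i}$ --- whose $\mathbb{A}'$-part is $c_{i}$ and whose $a_{t}$-endpoint $f$ is also $c_{i}$ --- is a monochromatic combinatorial line, i.e. the instance set of the desired variable word.

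The hard part will be the bookkeeping that keeps the induction hypothesis legitimately applicable block by block. Because the colour of a completed word depends on all of its coordinates at once, the colouring that a new block effectively sees is not an $r$-colouring but a colouring of the $\mathbb{A}'$-words on that block whose palette is the set of all colourings of the as-yet-unfixed coordinates --- a vastly larger, but still finite, number of colours. Choosing the block lengths from the outside in so that each is large enough to absorb this exponential blow-up is the crux of the argument, and it is exactly what makes the resulting bound $N(t,r)$ grow so fast. The only remaining check, that the fixed parts of the successive lines are compatible so that the merged data really yield single honest combinatorial lines focused at one point rather than a tangle of partial ones, is routine once the blocks are laid out disjointly.
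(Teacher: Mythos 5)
You should first be aware that the paper offers no proof of this statement at all: it is quoted, with a citation, from the original article of Hales and Jewett, so your proposal can only be judged against the classical proofs in the literature. Read in context, $S_{0}$ here means $w\left(\mathbb{A}\right)$ for a single finite alphabet, which is how you read it, and your overall plan is precisely the classical colour-focusing argument: the dictionary between one-variable words of length $N$ and combinatorial lines in $\mathbb{A}^{N}$ is correct, the induction on $\left|\mathbb{A}\right|$ with trivial base case is correct, and choosing block lengths by a recursion that absorbs the palette of all completions is indeed the crux.

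However, the closing mechanism you describe has a genuine gap. Your dichotomy --- each new line either ``closes off a fully monochromatic line over $\mathbb{A}$ (if the new $\mathbb{A}'$-colour coincides with the colour already present at the focus) or enlarges the focused family by one genuinely new colour'' --- is not exhaustive: the new line's $\mathbb{A}'$-colour may coincide with the colour $c_{i}$ of a line already in the family while differing from the colour of the focus, in which case you neither finish nor preserve pairwise distinctness. Moreover, gluing on a new block changes the points of the old lines (each acquires the new block's $a_{t}$-segment), so even the old colours $c_{1},\ldots,c_{m}$ are not automatically preserved under extension. The standard repair drops the distinctness invariant entirely. Choose words $w_{1},\ldots,w_{r}$, one per block, where $w_{l}$ is monochromatic on $\mathbb{A}'$ with respect to the palette that records the colour as a function of an arbitrary instance $w_{1}\left(\epsilon_{1}\right)\cdots w_{l-1}\left(\epsilon_{l-1}\right)$ of the already-chosen earlier words (prefix) together with an arbitrary word on the later blocks (suffix); quantifying prefixes only over instances of earlier words is what keeps the recursion on block lengths non-circular. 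Then apply the pigeonhole not to line colours but to the $r+1$ nested points $p_{i}=w_{1}\left(a_{t}\right)\cdots w_{i}\left(a_{t}\right)w_{i+1}\left(b\right)\cdots w_{r}\left(b\right)$, $0\leq i\leq r$, for a fixed $b\in\mathbb{A}'$: some pair satisfies $\chi\left(p_{i}\right)=\chi\left(p_{j}\right)$ with $i<j$, and the line obtained by setting blocks $i+1,\ldots,j$ simultaneously to a single letter (earlier blocks held at $a_{t}$, later blocks at $b$) is monochromatic, since its $\mathbb{A}'$-instances all receive colour $\chi\left(p_{i}\right)$ by replacing the varying blocks one at a time, and its $a_{t}$-instance is $p_{j}$ itself. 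With this substitution your outline becomes a complete and correct proof.
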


Now, we need to recall some definitions from \cite{key-6}.
\begin{defn}
Let $n\in\mathbb{N}$ and $v_{1},v_{2},\ldots,v_{n}$ be distinct
variables which are not members of $\mathbb{A}$.

(a) An $n$-variable word over $\mathbb{A}$ is a word $w$ over $\mathbb{A}\cup\left\{ v_{1},v_{2},\ldots,v_{n}\right\} $
such that $\left|w\right|_{v_{i}}\geq1$ for each $i\in\left\{ 1,2,\ldots,n\right\} $.

(b) If $w$ is an $n$-variable word over $\mathbb{A}$ and $\vec{x}=\left(x_{1},x_{2},\ldots,x_{n}\right)$,
then $w\left(\vec{x}\right)$ is the result of replacing each occurrence
of $v_{i}$ in $w$ by $x_{i}$ for each $i\in\left\{ 1,2,\ldots,n\right\} $.

(c) If $w$ is an $n$-variable word over $\mathbb{A}$ and $u=a_{1}a_{2}\ldots a_{n}$
is a length $n$ word, then $w\left(u\right)$ is the result of replacing
each occurrence of $v_{i}$ in $w$ by $a_{i}$ for each $i\in\left\{ 1,2,\ldots,n\right\} $. 
\end{defn}

This particular homomorphism is very useful to us.
\begin{defn}
Let $n\in\mathbb{N}$, for a set of alphabet $\mathbb{A}$, let $\vec{a}\in\mathbb{A}^{n}$.
Than the homomorphism $h_{\vec{a}}:S_{n}\left(\mathbb{A}\right)\rightarrow w\left(\mathbb{A}\right)$
is defined by $h_{\vec{a}}\left(w\right)=w\left(\vec{a}\right)$ for
all $w\in h_{\vec{a}}:S_{n}\left(\mathbb{A}\right)$. 
\end{defn}

\begin{defn}
Let $S,T\text{ and }R$ be semigroups (or partial semigroups) such
that $S\cup T$ is a semigroup (or partial semigroup) and $T$ is
an ideal of $S\cup T$. Then a homomorphism $\tau:T\rightarrow R$
is said to be $S$-independent if, for every $w\in T$ and every $u\in S$,
\[
\tau\left(uw\right)=\tau\left(w\right)=\tau\left(wu\right).
\]
\end{defn}

For an example, let $T$ be a semigroup with identity $e$. Then for
any $n\geq1$, a homomorphism $\tau:S_{n}\left(\mathbb{A}\right)\cup S_{0}\left(\mathbb{A}\right)\rightarrow T$
is $S_{0}\left(\mathbb{A}\right)$-independent if $\tau\left[S_{0}\left(\mathbb{A}\right)\right]=\left\{ e\right\} $.

The following is a version of a multi-variable extension of the Hales-Jewett
Theorem:
\begin{thm}
Assume that $\mathbb{A}$ is finite. Let $w\left(\mathbb{A}\right)$
be finitely colored and let $n\in\mathbb{N}$. There exists $w\in S_{n}\left(\mathbb{A}\right)$
such that $\left\{ w\left(\vec{x}\right):\vec{x}\in\mathbb{A}^{n}\right\} $
is monochromatic.
\end{thm}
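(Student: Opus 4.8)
The plan is to deduce this $n$-variable statement directly from the single-variable Hales--Jewett theorem recalled above by passing to the \emph{product alphabet} $\mathbb{B}=\mathbb{A}^{n}$, rather than by an induction on $n$. The guiding observation is that one variable ranging over $\mathbb{B}=\mathbb{A}^{n}$ encodes exactly $n$ independent variables ranging over $\mathbb{A}$, once each letter of $\mathbb{B}$ is read as a block of $n$ consecutive letters of $\mathbb{A}$. Thus a combinatorial line over $\mathbb{B}$ is nothing other than an $n$-variable word over $\mathbb{A}$.

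Concretely, first I would introduce the flattening map $\phi\colon w(\mathbb{B})\to w(\mathbb{A})$ that replaces each letter $c=(c^{1},\dots,c^{n})\in\mathbb{B}$ by the length-$n$ block $c^{1}c^{2}\cdots c^{n}$ and concatenates; this is a semigroup homomorphism (indeed injective, since one recovers the $\mathbb{B}$-word by regrouping into blocks of $n$). Pulling back the given coloring along $\phi$ yields a finite coloring $\chi'=\chi\circ\phi$ of $w(\mathbb{B})$. Since $\mathbb{B}$ is again a finite alphabet, the single-variable theorem applies and produces a one-variable word $L$ over $\mathbb{B}$, in a variable $v$, such that $\{L(b):b\in\mathbb{B}\}$ is $\chi'$-monochromatic.

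Next I would convert $L$ into the desired word $w\in S_{n}(\mathbb{A})$ by the same flattening, with the sole modification that each occurrence of $v$ is replaced by the block $v_{1}v_{2}\cdots v_{n}$ (one slot per coordinate), while each fixed letter $c\in\mathbb{B}$ is flattened as before. Because $L$ contains $v$ at least once, every $v_{i}$ occurs in $w$, so indeed $w\in S_{n}(\mathbb{A})$. The identity to verify is that substitution commutes with flattening: for every $\vec{x}=(x_{1},\dots,x_{n})\in\mathbb{A}^{n}=\mathbb{B}$ one has $w(\vec{x})=\phi\bigl(L(\vec{x})\bigr)$, since a fixed block is untouched and each variable block $v_{1}\cdots v_{n}$ becomes $x_{1}\cdots x_{n}$, which is exactly the flattening of the substituted letter $\vec{x}\in\mathbb{B}$. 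Hence $\{w(\vec{x}):\vec{x}\in\mathbb{A}^{n}\}=\{\phi(L(b)):b\in\mathbb{B}\}$ is $\chi$-monochromatic, as required.

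The step I expect to be the main obstacle is the bookkeeping in this correspondence rather than any deep difficulty: one must keep track of the fact that all variable positions of $L$ receive the \emph{same} letter of $\mathbb{B}$ under a substitution, and that this translates into all variable blocks of $w$ receiving the same tuple $(x_{1},\dots,x_{n})$, which is precisely the condition defining a single instance $w(\vec{x})$. Checking $w\in S_{n}(\mathbb{A})$ and the commuting identity $w(\vec{x})=\phi(L(\vec{x}))$ is then routine, and no genuine induction or compactness argument is needed beyond the single invocation of the one-variable Hales--Jewett theorem.
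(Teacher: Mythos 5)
Your proof is correct, but there is nothing in the paper to compare it against: the paper states this multi-variable theorem in its preliminaries as a recalled, known result (a ``version of a multi-variable extension of the Hales-Jewett Theorem,'' in effect quoted from the Hindman--Strauss--Zamboni paper cited as [HSZ]) and gives no proof of it whatsoever. So your argument should be judged on its own, and it stands up: the block-encoding reduction is the standard elementary derivation of the $n$-variable statement from the one-variable one. Since $\mathbb{A}$ is finite, $\mathbb{B}=\mathbb{A}^{n}$ is a finite alphabet, so the paper's Theorem 1.1 applies to the pulled-back coloring $\chi\circ\phi$ of $w(\mathbb{B})$; the two verifications you isolate are exactly the right ones and both go through --- $w\in S_{n}(\mathbb{A})$ because the single variable $v$ occurs in $L$ at least once, so every $v_{i}$ occurs in $w$ (this is all the paper's Definition 1.2(a) demands; the $v_{i}$ are allowed to occur only in consecutive blocks), and the commutation identity $w(\vec{x})=\phi(L(\vec{x}))$ holds letter-by-letter, which transports monochromaticity of $\{L(b):b\in\mathbb{B}\}$ under $\chi\circ\phi$ to monochromaticity of $\{w(\vec{x}):\vec{x}\in\mathbb{A}^{n}\}$ under $\chi$. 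What this buys, compared with routes via iteration of the one-variable theorem or via idempotent-ultrafilter arguments of the kind used elsewhere in the paper, is a completely self-contained, purely combinatorial proof using a single invocation of the one-variable Hales--Jewett theorem, with no induction and no algebra in $\beta S$.
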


Let, 
\[
\mathbb{A}_{1}\subseteq\mathbb{A}_{2}\subseteq\mathbb{A}_{3}\subseteq\ldots
\]
 is an increasing sequence of alphabets. Then for each $n,i\in\mathbb{N}$,
$S_{n}\left(\mathbb{A}_{i}\right)$ is the set of all $n$-variable
words over $\mathbb{A}_{i}$.

Let, $S_{n}=\bigcup_{i=1}^{\infty}S_{n}\left(\mathbb{A}_{i}\right)$
be the set of all $n$-variable words over $\mathbb{A}=\bigcup_{i=1}^{\infty}\mathbb{A}_{i}$
and $S_{0}=\bigcup_{i=1}^{\infty}w\left(\mathbb{A}_{i}\right)$ be
the set of all words of finite length over $\mathbb{A}=\bigcup_{i=1}^{\infty}\mathbb{A}_{i}$.

The following theorem is due to N.Karagiannis, which is a stronger
version of weak Carlson-Simpson theorem. 
\begin{thm}
\cite[Theorem 3]{key-7} Let $\left(\mathbb{A}_{i}\right)_{i=1}^{\infty}$
be an increasing sequence of finite alphabets and let $\mathbb{A}=\bigcup_{i=1}^{\infty}\mathbb{A}_{i}$.
Then for every finite coloring of $S_{0}$ there exists a sequence
$\left(w_{n}\left(x\right)\right)_{n=0}^{\infty}$ of variable words
over $\mathbb{A}$ such that for every $n\in\mathbb{N}$ and every
$m_{1}<m_{2}<\ldots<m_{n}$, the words of the form $w_{m_{0}}\left(a_{0}\right)w_{m_{1}}\left(a_{1}\right)\ldots w_{m_{n}}\left(a_{n}\right)$
with $a_{i}\in\mathbb{A}_{m_{i}}$, $i\in\left\{ i,2,\ldots,n\right\} $
are of the same color.
\end{thm}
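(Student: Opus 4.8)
The plan is to prove this by an inductive, combinatorial argument built only on the finite Hales-Jewett theorems recorded above, rather than by an ultrafilter argument: the algebraic route, through a minimal idempotent in the Stone-\v{C}ech compactification of the partial semigroup of variable words, is available in principle, but the feature of the conclusion that each block is filled with its \emph{own} letter makes the relevant idempotent delicate to produce. So I would keep the finite Hales-Jewett theorem as the sole Ramsey input, write $\chi$ for the given finite colouring of $S_{0}$, and split the construction into two stages.

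\textbf{Stage 1: force the colour to depend only on the indices.} I would build the sequence $(w_{m})_{m\geq0}$ of one-variable words together with a nested family of infinite ``reservoirs'' of admissible continuations, arranging the following \emph{letter-independence} property: for every $m_{0}<\cdots<m_{k}$ the colour $\chi\bigl(w_{m_{0}}(a_{0})\cdots w_{m_{k}}(a_{k})\bigr)$ does not depend on the letters $a_{i}\in\mathbb{A}_{m_{i}}$, only on the index set $\{m_{0},\dots,m_{k}\}$. At level $n$ one applies the finite Hales-Jewett theorem to the \emph{finite} alphabet $\mathbb{A}_{n}$: colour a candidate instance $u\in w(\mathbb{A}_{n})$ by the tuple $\bigl(\chi(Pu)\bigr)_{P}$, where $P$ ranges over the finitely many configurations already available from the lower levels, and take the block $w_{n}$ to be a variable word whose instances $\{w_{n}(a):a\in\mathbb{A}_{n}\}$ are monochromatic for this tuple-colouring. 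This forces $\chi(Pw_{n}(a))$ to be independent of $a$ for each visible $P$; to also kill the dependence on the letters buried in the earlier blocks, the choice must be made inside the current reservoir, which is then shrunk so that every stabilisation obtained at earlier levels survives the appending of all later blocks.

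\textbf{Stage 2: make the indices irrelevant.} Granting letter-independence, the rule $\chi^{*}(\{m_{0},\dots,m_{k}\}):=\chi\bigl(w_{m_{0}}(a_{0})\cdots w_{m_{k}}(a_{k})\bigr)$ is a well-defined finite colouring of $\mathcal{P}_{f}(\omega)$. I would feed it to the Finite Unions Theorem (the standard consequence of Hindman's theorem) to obtain pairwise disjoint finite sets $D_{0},D_{1},D_{2},\dots$, which, after passing to a subsequence, satisfy $\min D_{j}\geq j$ and have all their finite unions $D_{j_{0}}\cup\cdots\cup D_{j_{l}}$ of one fixed colour $\gamma$. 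Putting $W_{j}:=\prod_{m\in D_{j}}w_{m}$ (concatenation in increasing order, again a one-variable word) produces the sequence claimed by the theorem: for $j_{0}<\cdots<j_{l}$ and $c_{i}\in\mathbb{A}_{j_{i}}\subseteq\mathbb{A}_{\min D_{j_{i}}}$ one has $W_{j_{0}}(c_{0})\cdots W_{j_{l}}(c_{l})=\prod_{i}\prod_{m\in D_{j_{i}}}w_{m}(c_{i})$, a configuration of the Stage-1 family on the index set $\bigcup_{i}D_{j_{i}}$, whose colour is therefore $\chi^{*}\bigl(\bigcup_{i}D_{j_{i}}\bigr)=\gamma$. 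Hence every configuration built from $(W_{j})$ carries the single colour $\gamma$.

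\textbf{The main obstacle} is the coherence of the fusion in Stage 1. When $w_{n}$ is chosen it is controlled only as a \emph{final} factor of the configurations visible at that moment, whereas in the statement it may sit in the interior of longer configurations involving blocks not yet built; a plain forward induction therefore fails to close up, and one must instead maintain a reservoir of admissible continuations whose defining property already anticipates every future appending -- precisely the combinatorial content that an idempotent would encode automatically. The increasing alphabets contribute a second, milder difficulty: the Hales-Jewett instance used at level $n$ lives over $\mathbb{A}_{n}$, so the colourings being stabilised change with the level, and the bookkeeping must respect the inclusions $\mathbb{A}_{j}\subseteq\mathbb{A}_{m}$ for $j\leq m$ -- exactly what lets the substituted letters in Stage 2 range over the whole alphabet attached to each grouped block.
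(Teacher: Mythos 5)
A point of orientation first: the paper never actually proves this statement --- it is quoted from Karagiannis \cite{key-7} --- so there is no in-paper proof to compare against line by line. The closest thing is Theorem \ref{central}, which establishes a central-sets strengthening of this very statement by exactly the minimal-idempotent route you set aside as ``delicate'' (for any finite coloring some color class is central, so that theorem subsumes this one; the growing alphabets cause no trouble there, because at stage $n$ one only intersects the finitely many pullbacks $h_{\vec{a}}^{-1}\left[D^{*}\right]$, $\vec{a}\in\mathbb{A}_{n}^{n}$). As for your argument: Stage 2 is essentially correct --- granted letter-independence, $\chi^{*}$ is well defined, the Finite Unions Theorem applies, and grouping blocks works --- except that you need the sets $D_{j}$ to form a \emph{block sequence} ($\max D_{j}<\min D_{k}$ for $j<k$ along the chosen subsequence), not merely to be pairwise disjoint with $\min D_{j}\geq j$: otherwise $W_{j_{0}}\left(c_{0}\right)\cdots W_{j_{l}}\left(c_{l}\right)$ lists the $w_{m}$'s out of increasing order of $m$ and is not a Stage-1 configuration at all. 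That is repairable by the same subsequence extraction you already invoke.

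The genuine gap is Stage 1, which is where the entire content of the theorem lies. The finite Hales-Jewett application at level $n$ yields $\chi\left(Pw_{n}\left(a\right)\right)=\chi\left(Pw_{n}\left(a'\right)\right)$ for each \emph{fixed} visible prefix $P$ and all $a,a'\in\mathbb{A}_{n}$; letter-independence additionally requires $\chi\left(Pw_{n}\left(a\right)\right)=\chi\left(P'w_{n}\left(a\right)\right)$ for two prefixes $P\neq P'$ differing in letters buried in earlier blocks, and no Hales-Jewett application supplies an equality \emph{across} prefixes. The ``reservoir'' cannot supply it either: making $u\mapsto\left(\chi\left(Pu\right),\chi\left(P'u\right)\right)$ constant on a rich set of continuations is a partition property, and the constant value may perfectly well be a pair with two distinct coordinates --- equality of coordinates is not a Ramsey-type conclusion, so it must be secured by how the blocks $w_{m}$ themselves are chosen, with a guarantee that quantifies over all future extensions. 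That guarantee is precisely what your sketch defers to the unconstructed reservoir, and constructing it \emph{is} the theorem: the combinatorial proof cited here fixes the target color $\gamma$ in advance by pigeonhole and carries the much stronger induction hypothesis that arbitrarily long finite systems, all of whose instances have color $\gamma$, extend the current configuration, while the algebraic proof gets the same effect for free from the sets $D^{*}$ of an idempotent ultrafilter. As written, your proposal restates the difficulty (as you yourself note, it is ``precisely the combinatorial content that an idempotent would encode automatically'') rather than resolving it, so the proof does not close.
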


In \cite{key-6}, authors have shown various combining extension of
Hales-Jewett theorem. After that N. Karagiannis has proved that weak
Carlson-Simpson theorem holds if one replaced the set of alphabet
$\mathbb{A}$ by an increasing chain of alphabets $\mathbb{A}_{1}\subseteq\mathbb{A}_{2}\subseteq\mathbb{A}_{3}\subseteq\ldots$. 

In this paper, we used the Stone-\v{C}ech compactification of a semigroup
(or partial semigroup) $S$ to develop some results from \cite{key-6}
in the infinite alphabet settings.

Let $\left(S,\cdot\right)$ be any discrete semigroup and denote its
Stone-\v{C}ech compactification as $\beta S$. $\beta S$ is the
set of all ultrafilters on $S$, where the points of $S$ are identified
with the principal ultrafilters. The basis for the topology is $\left\{ \bar{A}:A\subseteq S\right\} $,
where $\bar{A}=\left\{ p\in\beta S:A\in p\right\} $. The operation
of $S$ can be extended to $\beta S$ making $\left(\beta S,\cdot\right)$
a compact, right topological semigroup with $S$ contained in its
topological center. That is, for all $p\in\beta S$ the function $\rho_{p}:\beta S\rightarrow\beta S$
is continuous, where $\rho_{p}\left(q\right)=q\cdot p$ and for all
$x\in S$, the function $\lambda_{x}:\beta S\rightarrow\beta S$ is
continuous, where $\lambda_{x}\left(q\right)=x\cdot q$. For $p,q\in\beta S$
and $A\subseteq S$, $A\in p\cdot q$ if and only if $\left\{ x\in S:x^{-1}A\in q\right\} \in p$,
where $x^{-1}A=\left\{ y\in S:x\cdot y\in A\right\} $. 

Since $\beta S$ is a compact Hausdorff right topological semigroup,
it has a smallest two sided ideal denoted $K\left(\beta S\right)$,
which is the union of all of the minimal right ideals of $S$, as
well as the union of all of the minimal left ideals of $S$. Every
left ideal of $\beta S$ contains a minimal left ideal and every right
ideal of $\beta S$ contains a minimal right ideal. The intersection
of any minimal left ideal and any minimal right ideal is a group and
any two such groups are isomorphic. Any idempotent $p$ in $\beta S$
is said to be minimal if and only if $p\in K\left(\beta S\right)$.
A subset $A$ of $S$ is then central if and only if there is some
minimal idempotent $p$ such that $A\in p$. For more details, the
reader can see \cite{key-5}.

We need to use some elementary structure of partial semigroup. Here
we recall some definitions of our need from \cite{key-4}.
\begin{defn}
A partial semigroup is defined as a pair $\left(G,\ast\right)$ where
$\ast$ is an operation defined on a subset $X$ of $G\times G$ and
satisfies the statement that for all $x,y,z$ in $G$, $\left(x\ast y\right)\ast z=x\ast\left(y\ast z\right)$
in the sense that if either side is defined, so is the other and they
are equal.
\end{defn}

If $\left(G,\ast\right)$ is a partial semigroup, we will denote it
by $G$, when the operation $\ast$ is clear from the context. Now,
we give an example which will be useful in our work.
\begin{example}
Let us consider any sequence $\left\langle x_{n}\right\rangle _{n=1}^{\infty}$
in $\omega$ and let 
\[
G=\text{FS}\left(\left\langle x_{n}\right\rangle _{n=1}^{\infty}\right)=\left\{ \sum_{j\in H}x_{j}:H\in\mathcal{P}_{f}\left(\mathbb{N}\right)\right\} 
\]
 and 
\[
X=\left\{ \left(\sum_{j\in H_{1}}x_{j},\sum_{j\in H_{2}}x_{j}\right):H_{1}\cap H_{2}=\emptyset\right\} .
\]
 Define $\ast:X\rightarrow G$ by 
\[
\left(\sum_{j\in H_{1}}x_{j},\sum_{j\in H_{2}}x_{j}\right)\longrightarrow\sum_{j\in H_{1}}x_{j}+\sum_{j\in H_{2}}x_{j}.
\]
 It is easy to check that $G$ is a commutative partial semigroup.
One can similarly check the same for 
\[
G=\text{FP}\left(\left\langle x_{n}\right\rangle _{n=1}^{\infty}\right)=\left\{ \prod_{j\in H}x_{j}:H\in\mathcal{P}_{f}\left(\mathbb{N}\right)\right\} .
\]
\end{example}

\begin{defn}
Let $G$ be a partial semigroup. 

(a) For $g\in G$, $\varphi\left(g\right)=\left\{ h\in G:g\ast h\text{ is defined}\right\} $.

(b) For $H\in\mathcal{P}_{f}\left(G\right)$, $\sigma\left(H\right)=\bigcap_{h\in H}\varphi\left(h\right)$.

(c) $\left(G,\ast\right)$ is adequate if and only if $\sigma\left(H\right)\neq\emptyset$
for all $H\in\mathcal{P}_{f}\left(G\right)$.
\end{defn}

Now, for a semigroup $S$, let $^{\mathbb{N}}S$ be the set of all
sequences in $S$ and let
\[
\mathcal{J}_{m}=\left\{ t=\left(t_{1},t_{2},\ldots,t_{m}\right)\in\mathbb{N}^{m}:t_{1}<t_{2}<...<t_{m}\right\} .
\]

\begin{defn}
\label{J}Let $\left(S,\cdot\right)$ be a semigroup and $A\subseteq S$. 
\begin{enumerate}
\item Then $A$ is a $J$-set if and only if for each $F\in\mathcal{P}_{f}\left(^{\mathbb{N}}S\right)$
there exist $m\in\mathbb{N}$, $a=\left(a_{1},a_{2},\ldots,a_{m+1}\right)\in S^{m+1}$
and $t=\left(t_{1},t_{2},\ldots,t_{m}\right)\in\mathcal{J}_{m}$ such
that for each $f\in F$, 
\[
\left(\prod_{j=1}^{m}a_{j}\cdot f\left(t_{j}\right)\right)\cdot a_{m+1}\in A.
\]
\item $J\left(S\right)=\left\{ p\in\beta S:\left(\forall A\in p\right)\left(A\text{ is a }J\text{-set}\right)\right\} .$
\item A set $A\subseteq S$ is called $C$-set if it is an element of some
idempotent in $J\left(S\right)$.
\end{enumerate}
\end{defn}

Now, we need to recall the definition of $J$-set for partial semigroups.
First, let $\mathcal{F}$ be the set of all adequate sequences in
$S$.
\begin{defn}
Let $G$ be an adequate partial semigroup. Then a set $A\subseteq G$
is a $J$-set if and only if for all $F\in\mathcal{P}_{f}\left(\mathcal{F}\right)$
and all $L\in\mathcal{P}_{f}\left(S\right)$, there exist $m\in\mathbb{N}$,
$a=\left(a_{1},a_{2},\ldots,a_{m+1}\right)\in S^{m+1}$ and $t=\left(t_{1},t_{2},\ldots,t_{m}\right)\in\mathcal{J}_{m}$
such that for all $f\in F$, 
\[
\left(\prod_{i=1}^{m}a_{i}\ast f\left(t_{i}\right)\right)\ast a_{m+1}\in A\cap\sigma\left(L\right).
\]
\end{defn}

Then we have proved one of the main theorem from \cite[Theorem 17]{key-6}
for infinite sequence of alphabets.

\section{Our Results}

Throughout this section, we have developed various results from \cite{key-6}
using infinite sequence of alphabets. Some of the results also hold
in various small sets, like $C$-sets, $J$-sets etc. The following
fact is very useful to us:
\begin{fact}
Now, let us take the homomorphism $\tau:S_{0}\cup S_{1}\rightarrow\omega$
be defined by $\tau\left(w\right)=\left|w\right|_{v}$. So, $\tau\left(w\right)=0\text{ if and only if }w\in S_{0}$.
Let $D\subseteq S_{0}$ is a $J$-set and let $\left\langle x_{n}\right\rangle _{n=1}^{\infty}$
be a sequence in $\mathbb{N}$. Let $y_{1}=0,\,y_{n+1}=x_{n}$, for
all $n\in\mathbb{N}$. Then $A=\text{FS}\left(\left\langle y_{n}\right\rangle _{n=1}^{\infty}\right)$
is an adequate partial semigroup. As, $\tau$ is a homomorphism, one
can easily check that $\tau^{-1}\left[A\right]$ is also an adequate
partial semigroup. Now, let $T$ is an adequate partial semigroup
and $S$ be any partial subsemigroup of $T$. Let $F$ be a nonempty
finite set of partial semigroup homomorphisms from $T$ to $S$, which
are $S$-preserving. Then, proceeding similar to the proof of \cite[Lemma2.1]{key-2},
it can be easily be verified that for any $J$-set $D\subseteq S$,
$\bigcap_{\nu\in F}\nu^{-1}\left[D\right]$ is a $J$-set in $T$.
So, let $T=S_{0}\cup S_{1}$, $\tau^{-1}\left[A\right]$ is also a
$J$-set in $T$. Also, it is easy to check that $S_{0}$ is not a
$J$-set in $S_{n}\cup S_{0}$.
\end{fact}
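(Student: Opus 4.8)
The plan is to read the Fact as a chain of four claims and settle them in order, keeping the $J$-set pullback as the crux. The identity ``$\tau(w)=0$ exactly when $w\in S_0$'' is immediate, since $\tau(w)=|w|_v$ and $S_0$ consists precisely of the words in which $v$ does not occur. For the adequacy of $A=\mathrm{FS}(\langle y_n\rangle_{n=1}^{\infty})$ I would simply invoke the Example: $A$ carries the disjoint-support partial sum, and given $H\in\mathcal P_f(A)$, any singleton $y_k$ whose index $k$ exceeds every index appearing in the representations of the members of $H$ lies in $\sigma(H)$; the normalization $y_1=0$ serves only to force $0\in A$, so that $S_0=\tau^{-1}[\{0\}]\subseteq\tau^{-1}[A]$. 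To get the adequacy of $\tau^{-1}[A]$ I would transport the partial operation along $\tau$: declare $w\ast w'$ defined precisely when $\tau(w)\ast\tau(w')$ is defined in $A$, so that $\tau$ becomes a partial-semigroup homomorphism $\tau^{-1}[A]\to A$ with $\tau(w\ast w')=\tau(w)+\tau(w')\in A$; then for $H\in\mathcal P_f(\tau^{-1}[A])$ I pick $b\in\sigma(\tau[H])\subseteq A$ and any word $w$ with $|w|_v=b$ (such a word exists for every value in $\omega$), whence $w\in\sigma(H)$.

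The main work is the pullback lemma, which I would prove in the manner of \cite[Lemma 2.1]{key-2}, reading ``$S$-preserving'' as ``restricting to the identity on $S$''. Fix a finite family $\mathcal G$ of adequate sequences of $T$ and $L\in\mathcal P_f(T)$. The first step is to check that each $\nu\in F$, being a partial-semigroup homomorphism into $S$, carries an adequate sequence of $T$ to an adequate sequence of $S$, so that $\{\nu\circ g:\nu\in F,\ g\in\mathcal G\}$ is a finite family of adequate sequences of $S$. Applying the hypothesis that $D$ is a $J$-set in $S$ to this family, together with the finite set $L'=\bigcup_{\nu\in F}\nu[L]\subseteq S$, yields one uniform choice $m\in\mathbb N$, $b=(b_1,\dots,b_{m+1})\in S^{m+1}$ and $t\in\mathcal J_m$ with
\[
\Bigl(\prod_{i=1}^{m}b_i\ast(\nu\circ g)(t_i)\Bigr)\ast b_{m+1}\in D\cap\sigma(L')
\]
for all $\nu\in F$ and $g\in\mathcal G$. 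Since the $b_i$ already lie in $S$ and every $\nu$ fixes $S$ pointwise, I set $a_i:=b_i$ and use that $\nu$ is a homomorphism to obtain
\[
\nu\Bigl(\bigl(\prod_{i=1}^{m}a_i\ast g(t_i)\bigr)\ast a_{m+1}\Bigr)=\Bigl(\prod_{i=1}^{m}b_i\ast(\nu\circ g)(t_i)\Bigr)\ast b_{m+1}\in D,
\]
so the single configuration lands in $\bigcap_{\nu\in F}\nu^{-1}[D]$ for every $g\in\mathcal G$, while the choice of $L'$ places it in $\sigma(L)$ as well. This is the $J$-set property of the intersection in $T$.

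To conclude that $\tau^{-1}[A]$ is a $J$-set in $T=S_0\cup S_1$ I would run the same mechanism with the single homomorphism $\tau$ and target $A$. Two points differ from the general statement and are where I expect to spend the most care. First, $A$ must itself be a $J$-set: being an $\mathrm{FS}$-set it is an IP set, and IP sets are $J$-sets in $(\omega,+)$, which supplies the witnesses $b_i\in A$; I would either cite this or verify it directly using the rich forward-difference structure of $A$ (every nonzero element of $A$ is realized as $u'-u$ with $u,u'\in A$ and $u$ arbitrarily large). Second, the codomain $\omega$ is not literally a partial subsemigroup of $T$, so ``$\tau$ fixes $S$'' is replaced by ``$\tau$ is surjective'': each $b_i\in A\subseteq\omega$ is lifted to a word $a_i\in S_0\cup S_1$ with $\tau(a_i)=b_i$ (a word with no variable when $b_i=0$, a one-variable word with $b_i$ occurrences of $v$ otherwise), after which the homomorphism computation gives $\tau\bigl((\prod_i a_i\cdot g(t_i))\cdot a_{m+1}\bigr)\in A$, i.e.\ the configuration lies in $\tau^{-1}[A]$. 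Here $T$ is a genuine (total) semigroup, so the $\sigma$-clause is vacuous and only this surjective-lifting variant of the lemma is needed; reconciling it with the retraction form used for the general statement is the delicate step.

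Finally, ``$S_0$ is not a $J$-set in $S_n\cup S_0$'' I would settle by a counting obstruction, which I do not expect to be hard. Concatenation is total on $S_n\cup S_0$, so $\sigma$ imposes nothing and the semigroup form of Definition \ref{J} applies. Take $F=\{f\}$ with $f$ the constant sequence $f(k)=v_1v_2\cdots v_n$. For every $m\in\mathbb N$, every $a\in(S_n\cup S_0)^{m+1}$ and every $t\in\mathcal J_m$, the word $\bigl(\prod_{j=1}^{m}a_j\,f(t_j)\bigr)a_{m+1}$ contains the factor $f(t_1)=v_1\cdots v_n$, hence at least one occurrence of each $v_i$; since no factor can delete a variable, this word lies in $S_n$ and not in $S_0$. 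As $m\ge1$ always, no admissible triple $(m,a,t)$ can place the configuration in $S_0$, so $S_0$ fails the $J$-set condition, which completes the Fact.
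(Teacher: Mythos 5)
Your preliminary items are fine: the characterization of $\tau^{-1}[\{0\}]$, the adequacy of $A$ and of $\tau^{-1}[A]$, and your counting argument that $S_{0}$ is not a $J$-set in $S_{n}\cup S_{0}$ (any configuration contains the factor $f(t_{1})=v_{1}\cdots v_{n}$, and $m\geq1$ always) are all correct. The fatal gap is in your treatment of ``$\tau^{-1}[A]$ is a $J$-set in $T=S_{0}\cup S_{1}$''. Your key input there --- ``IP sets are $J$-sets in $(\omega,+)$'' --- is false. Take $x_{n}=3^{n}$, so $A=\{0\}\cup\mathrm{FS}\left(\langle3^{n}\rangle_{n=1}^{\infty}\right)$. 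Applying Definition \ref{J} to the three constant sequences $f_{i}\equiv i$ ($i=1,2,3$) forces elements $c+m$, $c+2m$, $c+3m$ of $A$ with $m\geq1$, i.e.\ a nontrivial three-term arithmetic progression in $A$; but $A$ contains none (if $a+c=2b$ with $a,b,c$ sums of distinct powers of $3$, then comparing ternary digits, which involve no carries, gives $a=b=c$). So this IP set is not a $J$-set, and neither your citation nor your proposed ``forward-difference'' verification can succeed. Worse, under your reading --- $T$ as a total semigroup, $\sigma$-clause vacuous --- the statement you are proving is itself false: since $\tau$ is a surjective homomorphism, lifting sequences through $\tau$ (exactly your own lifting device, run in the other direction) shows that $\tau^{-1}[A]$ is a $J$-set in the total semigroup $S_{0}\cup S_{1}$ if and only if $A$ is a $J$-set in $(\omega,+)$. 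So no repair is possible within that reading. The content the paper actually uses later (in the proof of Theorem \ref{Th 4}) is the partial-semigroup one: the ambient object is the adequate partial semigroup $\tau^{-1}[A]$, and the $J$-set definition there quantifies only over \emph{adequate} sequences and demands membership in $\sigma(L)$. A sequence like the lift of $f\equiv1$ is not adequate for the disjoint-support operation (its terms cannot be multiplied together even once inside $\tau^{-1}[A]$), and this restriction is precisely what blocks the counterexample. Discarding the partial structure as ``vacuous'' discards the only thing that makes the claim true.

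There is a second, related gap in your proof of the general pullback lemma, which matters because the Fact asserts it for partial semigroups. From the fact that $\bigl(\prod_{i}b_{i}\ast(\nu\circ g)(t_{i})\bigr)\ast b_{m+1}$ is defined and lies in $D\cap\sigma(L')$ \emph{in $S$}, you cannot conclude that $\bigl(\prod_{i}b_{i}\ast g(t_{i})\bigr)\ast b_{m+1}$ is defined \emph{in $T$}, nor that it lies in $\sigma(L)$: a homomorphism of partial semigroups transfers definedness only in the direction ``$x\ast y$ defined $\Rightarrow$ $\nu(x)\ast\nu(y)$ defined'', and at both places you need the converse. So ``the choice of $L'$ places it in $\sigma(L)$'' is a non sequitur. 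A correct argument must use the adequacy of the sequences $g$ (to know the increasing products of the $g(t_{i})$ are defined in $T$) and must arrange membership in $\sigma(L)$ by choosing the $t_{i}$ sufficiently far along those adequate sequences, rather than inferring it from the image. For total semigroups your computation is the standard one and agrees with \cite[Lemma 2.1]{key-2}; it is exactly the partial case --- the case the Fact is about --- where these two points require an argument.
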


The following result are a variation of Hales-Jewett theorem for piecewise
syndetic set:
\begin{lem}
\label{pws to J} Let $m,n\in\mathbb{N}$ and $F=\left\{ h_{\vec{a}}:\vec{a}\in\mathbb{A}_{m}^{n}\right\} $
be a finite nonempty set of homomorphism mappings from $T=S_{n}\cup S_{0}\longrightarrow S_{0}$
which are equal to the identity mapping on $S_{0}$. Let $D\subseteq S_{0}$
is a piecewise syndetic set in $S_{0}$, then $\bigcap_{\nu\in F}h_{\vec{a}}^{-1}\left[D\right]$
contains $w_{n}\left(\vec{a}\right)$, for all $\vec{a}\in\mathbb{A}_{m}^{n}$
and $w_{n}\in S_{n}\left(\mathbb{A}\right)$.
\end{lem}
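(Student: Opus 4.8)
The plan is to prove the equivalent assertion that there exists an $n$-variable word $w_{n}\in S_{n}\left(\mathbb{A}\right)$ with $w_{n}\left(\vec{a}\right)\in D$ for every $\vec{a}\in\mathbb{A}_{m}^{n}$; such a $w_{n}$ lies in $h_{\vec{a}}^{-1}\left[D\right]$ for each $\vec{a}$ and hence in $\bigcap_{\vec{a}\in\mathbb{A}_{m}^{n}}h_{\vec{a}}^{-1}\left[D\right]$. The strategy mirrors the classical derivation of van der Waerden's theorem for piecewise syndetic sets: the piecewise syndetic window lets us catch a large finite cube inside a bounded number of translates of $D$, and a finite coloring together with the multidimensional Hales--Jewett theorem then extracts a whole combinatorial $n$-subspace lying, after one fixed translate, inside $D$.

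First I would unpack piecewise syndeticity. Since $D$ is piecewise syndetic in $\left(S_{0},\text{concatenation}\right)$, fix $G\in\mathcal{P}_{f}\left(S_{0}\right)$ witnessing this, so that for every $F\in\mathcal{P}_{f}\left(S_{0}\right)$ there is $x\in S_{0}$ with $Fx\subseteq\bigcup_{g\in G}g^{-1}D$; unravelling, for each $f\in F$ there is $g\in G$ with $g\,f\,x\in D$. Working over the finite alphabet $\mathbb{A}_{m}$, I realize the full length-$N$ cube as the instances of the universal word: let $W=v_{1}v_{2}\cdots v_{N}$, so that $\left\{ W\left(\vec{c}\right):\vec{c}\in\mathbb{A}_{m}^{N}\right\} =\mathbb{A}_{m}^{N}\subseteq S_{0}$. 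Applying the window to $F=\mathbb{A}_{m}^{N}$ produces a common suffix $x\in S_{0}$ and, for each $\vec{c}\in\mathbb{A}_{m}^{N}$, an element $g\left(\vec{c}\right)\in G$ with $g\left(\vec{c}\right)\,W\left(\vec{c}\right)\,x\in D$. This defines a coloring $\vec{c}\mapsto g\left(\vec{c}\right)$ of $\mathbb{A}_{m}^{N}$ by the finite set $G$.

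Now I would invoke the finite multidimensional Hales--Jewett theorem, the finite-cube avatar of the multivariable extension recorded above: for $N$ sufficiently large in terms of $\left|\mathbb{A}_{m}\right|$, $n$ and $\left|G\right|$, every $\left|G\right|$-coloring of $\mathbb{A}_{m}^{N}$ admits a monochromatic $n$-dimensional combinatorial subspace. Such a subspace is exactly the set of instances of some $n$-variable word $\widetilde{w}\in S_{n}\left(\mathbb{A}_{m}\right)$ of length $N$, and monochromaticity means there is a single $g^{*}\in G$ with $g^{*}\,\widetilde{w}\left(\vec{a}\right)\,x\in D$ for all $\vec{a}\in\mathbb{A}_{m}^{n}$. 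Setting $w_{n}:=g^{*}\,\widetilde{w}\,x$, the concatenation of the constant words $g^{*},x$ with the $n$-variable word $\widetilde{w}$, yields an element of $S_{n}\left(\mathbb{A}\right)$, and since instantiation affects only variable positions we get $w_{n}\left(\vec{a}\right)=g^{*}\,\widetilde{w}\left(\vec{a}\right)\,x\in D$ for every $\vec{a}\in\mathbb{A}_{m}^{n}$, as required.

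The point that needs care --- the main obstacle --- is the interaction between the two moves. On the one hand, the catching translate $g\left(\vec{c}\right)$ genuinely depends on the point $\vec{c}$, and it is only the monochromatic subspace produced by Hales--Jewett that lets us replace it by a single $g^{*}$ uniform over the subspace. On the other hand, one must check that prepending $g^{*}$ and appending $x$ does not destroy the combinatorial-line structure: this is precisely the identity $\left(g^{*}\,\widetilde{w}\,x\right)\left(\vec{a}\right)=g^{*}\,\widetilde{w}\left(\vec{a}\right)\,x$, valid because $g^{*}$ and $x$ carry no variables, so left and right concatenation by fixed words commute with the substitution homomorphisms $h_{\vec{a}}$. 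Finiteness of $\mathbb{A}_{m}$ is essential, since it is what makes the Hales--Jewett number $N$ finite; restricting attention to variable words of length exactly $N$ is what aligns this finite-cube version with the multivariable statement above.
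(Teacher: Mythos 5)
Your proposal is correct, but it takes a genuinely different route from the paper's proof of Lemma \ref{pws to J}. The paper argues algebraically: it invokes \cite[Corollary 6]{key-6} to conclude that $\bigcap_{\vec{a}\in\mathbb{A}_{m}^{n}}h_{\vec{a}}^{-1}\left[D\right]$ is itself piecewise syndetic in $T=S_{n}\cup S_{0}$, then uses the fact that $S_{0}$ is not piecewise syndetic in $T$ (together with partition regularity of piecewise syndeticity) to find some $w_{n}\in S_{n}$ lying in the intersection, from which $w_{n}\left(\vec{a}\right)=h_{\vec{a}}\left(w_{n}\right)\in D$ falls out immediately. You instead work directly from the Hindman--Strauss definition of piecewise syndeticity: the thick set $\bigcup_{g\in G}g^{-1}D$ swallows the finite cube $\mathbb{A}_{m}^{N}$ after a right translate $x$, the choice of catching translate $g\left(\vec{c}\right)$ gives a $\left|G\right|$-coloring of the cube, and the finitary multidimensional Hales--Jewett theorem produces a monochromatic $n$-subspace $\widetilde{w}$, so that $w_{n}=g^{*}\widetilde{w}x$ witnesses the claim; your attention to sidedness ($g^{*}$ on the left, $x$ on the right, matching $Fx\subseteq\bigcup_{g\in G}g^{-1}D$) is exactly what is needed in the noncommutative free semigroup, and since any finite word over $\mathbb{A}=\bigcup_{i}\mathbb{A}_{i}$ lies in some $w\left(\mathbb{A}_{j}\right)$, your $w_{n}$ does land in $S_{n}\left(\mathbb{A}\right)$. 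What your approach buys is elementarity and self-containedness --- no ultrafilters, no appeal to the machinery of \cite{key-6} --- plus explicit structure of the witness (an $n$-variable word over $\mathbb{A}_{m}$ flanked by two constant words) and a finitary, quantitative flavor. What the paper's approach buys is reusability: the same algebraic template is repeated essentially verbatim for $J$-sets (via \cite[Theorem 2.3]{key-2}) and for central and $C$-sets in the results that follow, where largeness is defined through the algebra of $\beta S_{0}$ and your combinatorial unpacking has no direct analogue, since $J$-sets and $C$-sets admit no description by finitely many translates. Note also that you read the lemma's (garbled) conclusion the right way, as an existential statement about a single $w_{n}$, which is what the paper's own proof establishes.
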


\begin{proof}
Let $\in$$\mathbb{N}$, then from \cite[Corollary 6]{key-6}, $\bigcap_{h_{\vec{a}}^{-1}\in F}h_{\vec{a}}^{-1}\left[D\right]$
is piecewise syndetic in $T$. Now, $S_{0}$ is not piecewise syndetic
in $T$. So,$S_{n}\cap\bigcap_{h_{\vec{a}}\in F}h_{\vec{a}}^{-1}\left[D\right]$
is piecewise syndetic . Now, choose $N>m$ such that $w_{n}\in S_{n}\left(A_{N}\right)\cap\bigcap_{h_{\vec{a}}\in F}h_{\vec{a}}^{-1}\left[D\right]$.
So, $w_{n}\in\bigcap_{h_{\vec{a}}\in F}h_{\vec{a}}^{-1}\left[D\right]$
and $w_{n}\left(\vec{a}\right)\in D$ for all $\vec{a}\in A_{m}^{n}$.
\end{proof}
Proceeding as above for the $J$-sets, we get the following corollary:
\begin{cor}
Let $m,n\in\mathbb{N}$ and $F=\left\{ h_{\vec{a}}:\vec{a}\in\mathbb{A}_{m}^{n}\right\} $
be a finite nonempty set of homomorphism mappings from $T=S_{n}\cup S_{0}\longrightarrow S_{0}$
which are equal to the identity mapping on $S_{0}$. Let $D\subseteq S_{0}$
is a $J$-set in $S_{0}$, then $\bigcap_{h_{\vec{a}}\in F}h_{\vec{a}}^{-1}\left[D\right]$
contains $w_{n}\left(\vec{a}\right)$, for all $\vec{a}\in A_{m}^{n}$
and $w_{n}\in S_{n}\left(A\right)$.
\end{cor}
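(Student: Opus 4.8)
The plan is to mirror the proof of Lemma~\ref{pws to J}, replacing the role of piecewise syndeticity by that of the $J$-set property throughout. First I would observe that each $h_{\vec{a}}$ with $\vec{a}\in\mathbb{A}_{m}^{n}$ is a partial semigroup homomorphism from $T=S_{n}\cup S_{0}$ into $S_{0}$ that fixes $S_{0}$ pointwise, hence is $S_{0}$-preserving in the sense required by the Fact. Consequently, the Fact (which proceeds as in \cite[Lemma 2.1]{key-2}) applies directly and shows that, for the $J$-set $D\subseteq S_{0}$, the intersection $I:=\bigcap_{h_{\vec{a}}\in F}h_{\vec{a}}^{-1}\left[D\right]$ is a $J$-set in $T$.

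Next I would split $I$ along the decomposition $T=S_{n}\cup S_{0}$, writing $I=\left(I\cap S_{n}\right)\cup\left(I\cap S_{0}\right)$. Since $S_{0}$ is not a $J$-set in $T$ (as recorded at the end of the Fact) and the $J$-set property is inherited upward, every subset of $S_{0}$---in particular $I\cap S_{0}$---fails to be a $J$-set. Because the family of $J$-sets is partition regular, the union $I$ being a $J$-set forces one of the two pieces to be a $J$-set; as the $S_{0}$-piece is excluded, I conclude that $I\cap S_{n}$ is a $J$-set, and in particular nonempty.

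Finally I would extract the desired witness. Choosing any $w_{n}\in I\cap S_{n}$ and using that $S_{n}=\bigcup_{i}S_{n}\left(\mathbb{A}_{i}\right)$ with $\mathbb{A}_{1}\subseteq\mathbb{A}_{2}\subseteq\cdots$, I can fix $N>m$ with $w_{n}\in S_{n}\left(\mathbb{A}_{N}\right)$. Then $w_{n}\in h_{\vec{a}}^{-1}\left[D\right]$ for every $\vec{a}\in\mathbb{A}_{m}^{n}$, which by the definition of $h_{\vec{a}}$ means $w_{n}\left(\vec{a}\right)=h_{\vec{a}}\left(w_{n}\right)\in D$ for all such $\vec{a}$, as required.

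The main obstacle I anticipate is the step that turns ``$I$ is a $J$-set but $I\cap S_{0}$ is not'' into ``$I\cap S_{n}$ is a $J$-set'': this rests on the partition regularity of the collection of $J$-sets, which, while standard in the Hindman--Strauss framework, must be invoked with care in the partial semigroup setting, where the $J$-set definition involves the adequate sequences $\mathcal{F}$ and the sets $\sigma\left(L\right)$. A secondary point to verify is that the $h_{\vec{a}}$ genuinely meet the hypotheses of the Fact as $S_{0}$-preserving homomorphisms on $T$, so that the $J$-set intersection claim is legitimately available.
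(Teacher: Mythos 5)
Your proposal is correct and follows essentially the same route as the paper: the paper's proof is a one-line citation of \cite[Theorem 2.3]{key-2} together with Lemma \ref{pws to J}, whose intended content is exactly what you spell out --- the $J$-set preservation result for $S_{0}$-fixing homomorphisms (the Fact, i.e.\ \cite[Lemma 2.1]{key-2}), the observation that $S_{0}$ is not a $J$-set in $T$ combined with partition regularity of $J$-sets to land in $S_{n}$, and then evaluation $h_{\vec{a}}\left(w_{n}\right)=w_{n}\left(\vec{a}\right)\in D$. Your expanded write-up supplies the details the paper leaves implicit, including the caution that partition regularity must be checked in the partial semigroup setting.
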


\begin{proof}
From \cite[Theorem 2.3]{key-2} and Lemma \ref{pws to J}, the result
follows.
\end{proof}
In the next theorem we have proved \cite[Theorem 3]{key-7} for central
sets:
\begin{thm}
\label{central}Let $D\subseteq S_{0}$ be central then there exists
a sequence $\left(w_{i}\left(x\right)\right)_{i=1}^{\infty}$ of variable
words over $A$ such that for every $n\in\mathbb{N}$ and every 
\[
m_{1}<m_{2}<\ldots<m_{l}
\]
 the words of the form 
\[
w_{m_{1}}\left(\vec{a_{1}}\right)w_{m_{2}}\left(\vec{a_{2}}\right)\ldots w_{m_{l}}\left(\vec{a_{l}}\right)\in D
\]
 with $a_{1}\in A_{m_{1}}^{n},a_{2}\in A_{m_{2}}^{n},\ldots,a_{l}\in A_{m_{l}}^{n}$.
\end{thm}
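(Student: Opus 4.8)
The plan is to combine the algebraic description of central sets with the $J$-set form of the multivariable Hales--Jewett theorem recorded in the Corollary above. Since $S_0$ is the free semigroup of finite words over $\mathbb{A}$ under concatenation, I would first use the centrality of $D$ to fix a minimal idempotent $p\in\beta S_0$ with $D\in p$. The essential structural input is the standard fact that $J(S_0)$ is a closed two-sided ideal of $\beta S_0$, so that $K(\beta S_0)\subseteq J(S_0)$; in particular every member of the minimal idempotent $p$ is automatically a $J$-set in $S_0$, which is precisely what permits sets drawn from $p$ to be fed into the Corollary. I would then introduce the sharpened set $D^{\star}=\{u\in D:u^{-1}D\in p\}$ and recall the two elementary idempotent facts: $D^{\star}\in p$, and $u^{-1}D^{\star}\in p$ for every $u\in D^{\star}$.

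The construction of $(w_i)$ is a recursion that interleaves the idempotent finite-product mechanism with the Hales--Jewett mechanism. Fix $n\in\mathbb{N}$. At stage $l+1$ I would already have $n$-variable words $w_1,\ldots,w_l$ satisfying the invariant that every admissible partial product $w_{m_1}(\vec{a_1})\cdots w_{m_j}(\vec{a_j})$ with $m_1<\cdots<m_j\le l$ and $\vec{a_i}\in\mathbb{A}_{m_i}^{n}$ lies in $D^{\star}$. These products form a finite set $P_l\subseteq D^{\star}$ (finite since each $\mathbb{A}_{m_i}$ is finite), so $E=D^{\star}\cap\bigcap_{u\in P_l}u^{-1}D^{\star}$ is a finite intersection of members of $p$, hence $E\in p$ and $E$ is a $J$-set. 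Applying the Corollary to the $J$-set $E$ with substitution alphabet $\mathbb{A}_{l+1}$ then yields an $n$-variable word $w_{l+1}\in S_n(\mathbb{A})$ with $w_{l+1}(\vec{b})\in E$ for every $\vec{b}\in\mathbb{A}_{l+1}^{n}$.

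It remains to see that this preserves the invariant. For each $\vec{b}\in\mathbb{A}_{l+1}^{n}$ the length-one product $w_{l+1}(\vec{b})$ lies in $E\subseteq D^{\star}$, and for each old product $u\in P_l$ we have $w_{l+1}(\vec{b})\in u^{-1}D^{\star}$, whence $u\cdot w_{l+1}(\vec{b})\in D^{\star}$. Every admissible product with indices in $\{1,\ldots,l+1\}$ is either such a $u\in P_l$, or a single $w_{l+1}(\vec{b})$, or a product $u\cdot w_{l+1}(\vec{b})$ of one of the first two kinds; since the alphabets nest as $\mathbb{A}_{m_i}\subseteq\mathbb{A}_{l+1}$, all instantiations are covered, so the invariant carries over to $\{w_1,\ldots,w_{l+1}\}$. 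Running the recursion produces the desired sequence: any $w_{m_1}(\vec{a_1})\cdots w_{m_l}(\vec{a_l})$ with $m_1<\cdots<m_l$ and $\vec{a_j}\in\mathbb{A}_{m_j}^{n}$ lands in $D^{\star}\subseteq D$.

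I expect the main difficulty to be bookkeeping rather than depth. The two points to watch are that the finitely many partial products genuinely remain inside $D^{\star}$ at each stage, so that the translates $u^{-1}D^{\star}$ may all be intersected while staying in $p$, and that the substitution alphabets nest correctly so that the single word $w_{l+1}$ delivered by the Corollary simultaneously handles all of its instances. The substantive content is entirely outsourced: the Corollary supplies, inside any $J$-set, a full Hales--Jewett configuration (one variable word all of whose instances lie in the set), and the containment $K(\beta S_0)\subseteq J(S_0)$ guarantees that membership in the minimal idempotent $p$ is strong enough to invoke that Corollary at every stage.
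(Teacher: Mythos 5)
Your proposal is correct and matches the paper's own proof in all essentials: both fix a minimal idempotent $p$ with $D\in p$, pass to $D^{*}=\left\{ u\in D:u^{-1}D\in p\right\} $, and run the same recursion, at each stage choosing $w_{l+1}$ inside $E=D^{*}\cap\bigcap_{u\in P_{l}}u^{-1}D^{*}\in p$ via the Hales--Jewett-type result established earlier in the paper. The only difference is cosmetic: you feed $E$ into the $J$-set Corollary, justified by $K\left(\beta S_{0}\right)\subseteq J\left(S_{0}\right)$, whereas the paper invokes Lemma \ref{pws to J} directly, using that members of a minimal idempotent are piecewise syndetic; your route has the minor advantage of yielding the paper's subsequent $C$-set corollary with no changes, since it only needs $p$ to be an idempotent in $J\left(S_{0}\right)$ rather than a minimal one.
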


\begin{proof}
Let us choose a minimal idempotent $p$ such that $D\in p$. Then
$D^{*}\in p$, where $D^{*}=\left\{ x\in D:x^{-1}D\in p\right\} $.
Now, let $F_{1}=\left\{ h_{\vec{a_{1}}}:\,\vec{a_{1}}\in A_{1}^{n}\right\} $.
So, from Lemma \ref{pws to J}, there exists $w\in S_{n}\left(A_{1}\right)$
such that $w_{1}\in S_{n}\cap\bigcap_{\vec{a_{1}}\in A_{1}^{n}}h_{\vec{a_{1}}}^{-1}\left[D^{*}\right]\Rightarrow w_{1}\left(\vec{a_{1}}\right)\in D^{*}$
for all $\vec{a_{1}}\in A_{1}^{n}$.

So, $D^{*}\cap\bigcap_{\vec{a_{1}}\in A_{1}^{n}}w_{1}\left(\vec{a_{1}}\right)^{-1}\left[D^{*}\right]$.

Let, $p\in\mathbb{N}$ and suppose we have a sequence $\left(w_{i}\left(x\right)\right)_{i=1}^{p}$
such that 
\[
w_{m_{1}}\left(\vec{a_{1}}\right)\ldots w_{m_{l}}\left(\vec{a_{l}}\right)\in D
\]
 for all $m_{1}<m_{2}<\ldots<m_{l}\leq p$.

Now, choose 
\[
E=D^{*}\cap\bigcap_{m_{1}<m_{2}<\ldots<m_{l}\leq p}w_{m_{1}}\left(\vec{a_{1}}\right)\ldots w_{m_{l}}\left(\vec{a_{l}}\right)^{-1}\left[D^{*}\right]
\]
. Then, choose $w_{p+1}\in S_{n}\cap\bigcap_{\overrightarrow{a_{p+1}}\in A_{p+1}^{n}}h_{\overrightarrow{a_{p+1}}}^{-1}\left[E\right]$.

So, 
\[
w_{m_{1}}\left(\vec{a_{1}}\right)\ldots w_{m_{l}}\left(\vec{a_{l}}\right)\in D^{*}
\]
, where $m_{1}<m_{2}<\ldots<m_{l}\leq p+1$.

Hence by induction we can choose such infinite sequence $\left(w_{i}\left(x\right)\right)_{i=1}^{\infty}$.
\end{proof}
In fact, an easy combinatorial approach can be followed to prove the
following generalized result of the above theorem, using $C^{*}\in p$,
whenever $C\in p$.
\begin{cor}
Let $D\subseteq S_{0}$ be $C$-set then there exists a sequence $\left(w_{n}\left(x\right)\right)_{n=1}^{\infty}$
of variable words over $A$ such that for every $n\in\mathbb{N}$
and every 
\[
m_{1}<m_{2}<\ldots<m_{l}
\]
 the words of the form 
\[
w_{m_{1}}\left(\vec{a_{1}}\right)w_{m_{2}}\left(\vec{a_{2}}\right)\cdots w_{m_{l}}\left(\vec{a_{l}}\right)\in D
\]
 with $a_{1}\in A_{m_{1}}^{n},a_{2}\in A_{m_{2}}^{n},\ldots,a_{l}\in A_{m_{l}}^{n}$.
\end{cor}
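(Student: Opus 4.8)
The plan is to reuse the induction of Theorem~\ref{central} almost verbatim, making exactly two substitutions: replace the minimal idempotent by an arbitrary idempotent $p\in J\left(S_{0}\right)$, and replace each appeal to the piecewise-syndetic Lemma~\ref{pws to J} by the $J$-set corollary that immediately precedes Theorem~\ref{central}. By Definition~\ref{J}, a $C$-set $D$ is a member of some idempotent $p\in J\left(S_{0}\right)$, and by the defining property of $J\left(S_{0}\right)$ every member of $p$ is a $J$-set. This is precisely the feature that lets the weaker $J$-set corollary stand in for Lemma~\ref{pws to J}: minimality of $p$ was used in Theorem~\ref{central} only to guarantee that members of $p$ were piecewise syndetic, and here I need only that they are $J$-sets.

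Two standard facts about an idempotent $p$ will drive the argument, and I will quote rather than reprove them: if $C\in p$ then $C^{*}=\left\{ x\in C:x^{-1}C\in p\right\}\in p$, and if $x\in C^{*}$ then $x^{-1}C^{*}\in p$. Beyond these, the work is the finite bookkeeping already present above. I will also use throughout that each $\mathbb{A}_{i}$ is finite, so that each $A_{j}^{n}$ is finite and, at every stage $p$, there are only finitely many tuples $\left(\vec{a_{1}},\ldots,\vec{a_{l}}\right)$ and finitely many chains $m_{1}<\cdots<m_{l}\leq p$; consequently the sets $E$ I build are finite intersections of members of $p$ and hence again lie in $p$.

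Concretely: at the base step I take $D^{*}\in p$ and apply the $J$-set corollary to $F_{1}=\left\{ h_{\vec{a_{1}}}:\vec{a_{1}}\in A_{1}^{n}\right\}$ to obtain $w_{1}\in S_{n}$ with $w_{1}\left(\vec{a_{1}}\right)\in D^{*}$ for all $\vec{a_{1}}\in A_{1}^{n}$. At the inductive step, given $\left(w_{i}\right)_{i=1}^{p}$ for which every product $w_{m_{1}}\left(\vec{a_{1}}\right)\cdots w_{m_{l}}\left(\vec{a_{l}}\right)$ with $m_{1}<\cdots<m_{l}\leq p$ lies in $D^{*}$, I invoke the second fact above to get $x^{-1}D^{*}\in p$ for each such product $x$, so that
\[
E=D^{*}\cap\bigcap_{m_{1}<\cdots<m_{l}\leq p}\left(w_{m_{1}}\left(\vec{a_{1}}\right)\cdots w_{m_{l}}\left(\vec{a_{l}}\right)\right)^{-1}\left[D^{*}\right]\in p
\]
is a $J$-set. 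Applying the $J$-set corollary to $F_{p+1}=\left\{ h_{\vec{a_{p+1}}}:\vec{a_{p+1}}\in A_{p+1}^{n}\right\}$ and $E$ yields $w_{p+1}$ with $w_{p+1}\left(\vec{a_{p+1}}\right)\in E$ for all $\vec{a_{p+1}}\in A_{p+1}^{n}$, and membership in the preimage factors of $E$ forces every product extended by $w_{p+1}\left(\vec{a_{p+1}}\right)$ back into $D^{*}$. The induction then delivers the infinite sequence, and $D^{*}\subseteq D$ gives the conclusion.

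The main obstacle---indeed the only subtle point---is that I must carry the inductive hypothesis at the level of $D^{*}$ rather than $D$. The statement of Theorem~\ref{central} records the products as lying in $D$, but the recursion genuinely needs them in $D^{*}$, because the step $x^{-1}D^{*}\in p$ is available only for $x\in D^{*}$. Maintaining membership in $D^{*}$ at each finite stage---legitimate because the alphabets are finite and so only finitely many members of $p$ are intersected---is exactly the ``$C^{*}\in p$ whenever $C\in p$'' mechanism flagged before the statement, and it is what replaces the minimality hypothesis of the central-set version.
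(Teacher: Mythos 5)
Your proposal is correct and is essentially the paper's own argument: the paper proves this corollary by declaring it ``Same as in Theorem \ref{central},'' with the preceding remark that one should use $C^{*}\in p$ whenever $C\in p$, which is exactly the substitution you make (idempotent in $J\left(S_{0}\right)$ in place of a minimal idempotent, the $J$-set corollary in place of Lemma \ref{pws to J}). Your write-up is in fact more careful than the paper's, since it makes explicit that every member of $p\in J\left(S_{0}\right)$ is a $J$-set and that the induction must be carried at the level of $D^{*}$.
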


\begin{proof}
Same as in Theorem \ref{central}.
\end{proof}
Note that, for a $J$-set $D\subseteq S_{0}$ and a sequence $\left\langle x_{n}\right\rangle _{n=1}^{\infty}$
in $\mathbb{N}$, let $y_{1}=0$ and $y_{n+1}=x_{n}$ and $A=\text{FS}\left(\left\langle y_{n}\right\rangle _{n=1}^{\infty}\right)$
is an adequate partial semigroup.
\begin{thm}
\label{Th 4}Let $\tau:T=S_{0}\cup S_{1}\rightarrow\omega$ by $\tau\left(w\right)=\left|w\right|_{v_{1}}$.
Let, $D\subseteq S_{0}$ be a $J$-set and $\left\langle x_{n}\right\rangle _{n=1}^{\infty}$
be a sequence in $\mathbb{N}$.Then, there exists $\left\langle w_{n}\right\rangle _{n=1}^{\infty}$
such that 
\[
\left\{ w_{n}\left(a_{n}\right):a_{n}\in A_{n}\right\} \subseteq D\ \forall n\in\mathbb{N}
\]
 and for each $G\in\mathcal{P}_{f}\left(\mathbb{N}\right)$, 
\[
\sum_{n\in G}\tau\left(w_{n}\right)\in\text{FS}\left(\left\langle x_{n}\right\rangle _{n=1}^{\infty}\right).
\]
\end{thm}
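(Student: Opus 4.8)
The plan is to build the sequence $\langle w_n\rangle_{n=1}^{\infty}$ by induction on $n$, and to choose alongside it a strictly increasing sequence of integers $0=N_0<N_1<N_2<\cdots$ so that $\tau(w_n)\in\text{FS}\left(\left\langle x_i\right\rangle_{N_{n-1}<i\le N_n}\right)$ for every $n$. The two conclusions can then be treated independently. The membership $\{w_n(a):a\in\mathbb{A}_n\}\subseteq D$ will be extracted from the Hales--Jewett/$J$-set machinery of Lemma \ref{pws to J} together with the Fact preceding it, while the membership $\sum_{n\in G}\tau(w_n)\in\text{FS}\left(\left\langle x_n\right\rangle\right)$ will be pure bookkeeping: if I fix, for each $n$, a representation $\tau(w_n)=\sum_{i\in H_n}x_i$ with $H_n\subseteq(N_{n-1},N_n]$, then the intervals $(N_{n-1},N_n]$ are disjoint, hence the sets $H_n$ are pairwise disjoint, and so for every finite $G$ one gets $\sum_{n\in G}\tau(w_n)=\sum_{i\in\bigcup_{n\in G}H_n}x_i\in\text{FS}\left(\left\langle x_n\right\rangle\right)$.

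The decisive device is to absorb the constraint on the $\tau$-values into the ambient partial semigroup, rather than attempting to intersect two $J$-sets (which is in general illegitimate). Fix $n$ and assume $N_{n-1}$ has been defined. I would set $A_{N_{n-1}}=\{0\}\cup\text{FS}\left(\left\langle x_i\right\rangle_{i>N_{n-1}}\right)$ and $T'=\tau^{-1}\left[A_{N_{n-1}}\right]$. Exactly as in the Fact preceding Lemma \ref{pws to J}, $A_{N_{n-1}}$ is an adequate partial semigroup, being the $\text{FS}$-set of $0,x_{N_{n-1}+1},x_{N_{n-1}+2},\dots$; since $\tau$ is a homomorphism, $T'$ is an adequate partial semigroup and is itself a $J$-set in $T=S_0\cup S_1$. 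I would then invoke the general principle recorded in that Fact, but now with ambient partial semigroup $T'$: the subsemigroup is $S_0\subseteq T'$ (note $\tau[S_0]=\{0\}\subseteq A_{N_{n-1}}$), the finite family of $S_0$-preserving homomorphisms is $F=\{h_a:a\in\mathbb{A}_n\}$, and the $J$-set is $D\subseteq S_0$. This yields that $E:=\bigcap_{a\in\mathbb{A}_n}h_a^{-1}[D]\cap T'$ is a $J$-set in $T'$.

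It remains to pull a genuine variable word out of $E$. Because every superset of a $J$-set is a $J$-set, no subset of a set that is not a $J$-set can be a $J$-set; as $S_0$ is not a $J$-set in $T'$ (the same argument that shows $S_0$ is not a $J$-set in $S_n\cup S_0$ applies, using an adequate sequence of variable words whose $\tau$-values lie in $A_{N_{n-1}}$), the $J$-set $E$ cannot be contained in $S_0$. Hence there exists $w_n\in E$ with $w_n\notin S_0$, i.e.\ a one-variable word. For this $w_n$ one has $w_n(a)=h_a(w_n)\in D$ for every $a\in\mathbb{A}_n$, and $\tau(w_n)\in A_{N_{n-1}}$ with $\tau(w_n)\ge 1$ since $w_n$ is a variable word, so $\tau(w_n)\in\text{FS}\left(\left\langle x_i\right\rangle_{i>N_{n-1}}\right)$. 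Fixing one representation $\tau(w_n)=\sum_{i\in H_n}x_i$ with $\min H_n>N_{n-1}$ and putting $N_n=\max H_n$ closes the induction; running it for all $n$ and applying the disjointness bookkeeping of the first paragraph gives the required $\text{FS}$-membership.

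The point where I expect the real work to lie is the verification that the general principle of the Fact genuinely applies inside $T'=\tau^{-1}\left[A_{N_{n-1}}\right]$ and not merely inside the full semigroup $T$. Concretely, one must check that $S_0$ is a partial subsemigroup of $T'$, that each $h_a$ restricts to an $S_0$-preserving partial-semigroup homomorphism $T'\to S_0$, that $D$ remains a $J$-set of $S_0$ for the partial structure inherited from $T'$, and that $S_0$ fails to be a $J$-set in $T'$. These are precisely the places where the full-semigroup formulation used in Lemma \ref{pws to J} has to be reconciled with the partial-semigroup definition of a $J$-set given earlier; once they are settled, the intersection that would be impermissible for two arbitrary $J$-sets becomes permissible, since the $\tau$-constraint is no longer an extra set to be intersected in but the very definition of the ambient space.
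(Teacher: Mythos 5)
Your proposal is correct and takes essentially the same route as the paper: both arguments absorb the $\text{FS}$-constraint into the ambient adequate partial semigroup $\tau^{-1}\left[\left\{0\right\}\cup\text{IP-set}\right]$, invoke the Fact to conclude that $\bigcap_{a}h_{a}^{-1}\left[D\right]$ is a $J$-set there, use that $S_{0}$ is not a $J$-set in that partial semigroup to extract a genuine variable word, and then induct with a shrunken IP-set at each stage. The only cosmetic difference is that where the paper's inductive step posits an abstract sequence $\left\langle z_{n}\right\rangle$ with $\text{FS}\left(\left\langle z_{n}\right\rangle\right)$ inside the intersection of translates $\bigcap_{x}\left(-x+\text{FS}\left(\left\langle x_{n}\right\rangle\right)\right)$, you realize it concretely as the tail $\text{FS}\left(\left\langle x_{i}\right\rangle_{i>N_{n-1}}\right)$ with disjoint-support bookkeeping, which if anything makes the paper's unjustified existential step explicit.
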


\begin{proof}
Let $F_{1}=\left\{ h_{a_{1}}:a_{1}\in A_{1}\right\} $ be a finite
set of partial semigroup homomorphism from $\tau^{-1}\left(A_{1}\right)$
to $S_{0}$ where $A_{1}=\text{FS}\left(\left\langle y_{n}\right\rangle _{n=1}^{\infty}\right)$.

Let $D\subseteq S_{0}$ is a $J$-set then one can easily verify that
$\bigcap_{a_{1}\in A_{1}}h_{a_{1}}^{-1}\left[D\right]$ is a $J$-set
in $\tau^{-1}\left(A_{1}\right)$. As, $S_{0}$ is not a $J$-set
in $\tau^{-1}\left(A_{1}\right)$, there exists $w_{1}\in S_{1}\left(A_{m}\right)\cap\tau^{-1}\left(A_{1}\right)$
for some $m\geq1$such that $w_{1}\in\bigcap_{a_{1}\in A_{1}}h_{a_{1}}^{-1}\left[D\right]$.

So, $\tau\left(w_{1}\right)\in\text{FS}\left(\left\langle x_{n}\right\rangle _{n=1}^{\infty}\right)\subseteq A_{1}=\text{FS}\left(\left\langle y_{n}\right\rangle _{n=1}^{\infty}\right)$
and $w_{1}\left(a_{1}\right)\in D$\ $\forall a_{1}\in A_{1}$. Now
by induction, suppose we have proved that for some $p\in\mathbb{N}$,
$\left\{ w_{n}\left(a_{n}\right):a_{n}\in A_{n}\right\} \subseteq D$,
for all $n\in\left[1,p\right]$ and for $G\subseteq\left\{ 1,2,\ldots,p\right\} $,
$\sum_{n\in G}\tau\left(w_{n}\right)\in\text{FS}\left(\left\langle x_{n}\right\rangle _{n=1}^{\infty}\right)$.
Now let us take a sequence $\left\langle z_{n}\right\rangle _{n=1}^{\infty}$
such that $z_{1}=0$ and 
\[
A_{p+1}=\text{FS}\left(\left\langle z_{n}\right\rangle _{n=1}^{\infty}\right)\subseteq\bigcap_{x\in\left\{ \sum_{n\in G}\tau\left(w_{n}\right):G\subseteq\left\{ 1,2,\ldots,p\right\} \right\} }\left(-x+\text{FS}\left(\left\langle x_{n}\right\rangle _{n=1}^{\infty}\right)\right).
\]
 Then, $\tau^{-1}\left[A_{p+1}\right]$ is adequate partial semigroup
and as above there exists $m\geq p+1$ and a $w_{p+1}\in S_{1}\left(A_{m}\right)\cap\tau^{-1}\left[A_{p+1}\right]$
such that $w\in\bigcap_{a_{p+1}\in A_{p+1}}h_{a_{p+1}}^{-1}\left[D\right]$.

So, $\tau\left(w_{p+1}\right)\in A_{p+1}$ and $\left\{ w\left(a_{p+1}\right):a_{p+1}\in A_{p+1}\right\} $.
This proves the result.
\end{proof}
The following corollary is one of the interesting result:
\begin{cor}
Let $\tau:T=S_{0}\cup S_{1}\rightarrow\omega$ by $\tau\left(w\right)=\left|w\right|_{v_{1}}$.
Let, $D\subseteq S_{0}$ be a $C$-set and $\left\langle x_{n}\right\rangle _{n=1}^{\infty}$
be a sequence in $\mathbb{N}$. Then, there exists $\left\langle w_{n}\right\rangle _{n=1}^{\infty}$
and for any $\left\{ n_{1},n_{2},\ldots,n_{i}\right\} =G\in\mathcal{P}_{f}\left(\mathbb{N}\right)$,
such that 
\[
\left\{ w_{n_{1}}\left(a_{n_{1}}\right)w_{n_{2}}\left(a_{n_{2}}\right)\ldots w_{n_{l}}\left(a_{n_{l}}\right):a_{n_{i}}\in A_{n_{i}},\text{ where }n_{i}\in G\right\} \subseteq D
\]
 and $\sum_{n_{i}\in G}\tau\left(w_{n_{i}}\right)\in\text{FS}\left(\left\langle x_{n}\right\rangle _{n=1}^{\infty}\right).$
\end{cor}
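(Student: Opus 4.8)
The plan is to combine the self-similar ultrafilter argument of Theorem~\ref{central} with the $\text{FS}$-control of Theorem~\ref{Th 4}. Since $D$ is a $C$-set, I would first fix an idempotent $p\in J(S_0)$ with $D\in p$ and pass to $D^{*}=\left\{x\in D:x^{-1}D\in p\right\}$, using the standard fact that $D^{*}\in p$ and that $x^{-1}D^{*}\in p$ for every $x\in D^{*}$. This self-similarity of $D^{*}$ is exactly what allows partial concatenations to be extended while remaining inside $D$, and since every member of an ultrafilter in $J(S_0)$ is a $J$-set, any set we produce inside $p$ is automatically a $J$-set available to the selection machinery of Lemma~\ref{pws to J} and the Fact.

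For the base step, with $A_1=\text{FS}\left(\langle y_n\rangle\right)$ and $F_1=\left\{h_{a_1}:a_1\in A_1\right\}$, the Fact shows $\bigcap_{a_1\in A_1}h_{a_1}^{-1}\left[D^{*}\right]$ is a $J$-set in the adequate partial semigroup $\tau^{-1}\left[A_1\right]$; since $S_0$ is not a $J$-set there, I obtain $w_1\in S_1(A_m)\cap\tau^{-1}\left[A_1\right]$ with $w_1(a_1)\in D^{*}$ for all $a_1\in A_1$ and $\tau(w_1)\in\text{FS}\left(\langle x_n\rangle\right)$, exactly as in Theorem~\ref{Th 4} but with $D$ replaced by $D^{*}$.

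For the inductive step, given $w_1,\dots,w_p$, I would set
\[
E=D^{*}\cap\bigcap w_{m_1}(a_{m_1})\cdots w_{m_l}(a_{m_l})^{-1}\left[D^{*}\right],
\]
the intersection taken over all $m_1<\dots<m_l\le p$ and all admissible letter choices. By the inductive hypothesis each displayed partial product lies in $D^{*}$, so self-similarity gives every translated set in $p$, whence $E\in p$ and $E$ is a $J$-set. Simultaneously, exactly as in Theorem~\ref{Th 4}, I would choose an $\text{FS}$-set $A_{p+1}=\text{FS}\left(\langle z_n\rangle\right)$ inside $\bigcap_{x}\left(-x+\text{FS}\left(\langle x_n\rangle\right)\right)$, the intersection over all $x=\sum_{n\in G}\tau(w_n)$ with $G\subseteq\{1,\dots,p\}$, so that $\tau^{-1}\left[A_{p+1}\right]$ is an adequate partial semigroup in which any new $\tau$-value recombines with the earlier partial sums to land back in $\text{FS}\left(\langle x_n\rangle\right)$. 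Applying Lemma~\ref{pws to J} to $E$ inside $\tau^{-1}\left[A_{p+1}\right]$ then yields $w_{p+1}\in S_1(A_m)\cap\tau^{-1}\left[A_{p+1}\right]$ with $w_{p+1}(a)\in E$ for all $a\in A_{p+1}$. Because $w_{p+1}(a)\in E\subseteq u^{-1}\left[D^{*}\right]$ for each earlier partial product $u$, the extended concatenations $u\,w_{p+1}(a)$ lie in $D^{*}\subseteq D$, and the new $\tau$-value is absorbed into $\text{FS}\left(\langle x_n\rangle\right)$ by the choice of $A_{p+1}$; both invariants persist, and the induction delivers the required sequence $\langle w_n\rangle_{n=1}^{\infty}$.

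The main obstacle I anticipate is the simultaneous maintenance of the two structures at the inductive step: one must verify that $E$, after restriction to the adequate partial semigroup $\tau^{-1}\left[A_{p+1}\right]$, remains a genuine $J$-set there, so that a true variable word rather than an element of $S_0$ is extracted, while at the same time the $\text{FS}$-translate defining $A_{p+1}$ keeps every $\tau$-sum inside $\text{FS}\left(\langle x_n\rangle\right)$. Both points rest on the partial-semigroup $J$-set transfer recorded in the Fact and on $S_0$ failing to be a $J$-set in $\tau^{-1}\left[A_{p+1}\right]$; once these are secured the remaining bookkeeping is routine, consistent with the remark that an easy combinatorial approach using $C^{*}\in p$ suffices.
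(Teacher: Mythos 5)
Your proposal is correct and follows essentially the same route as the paper's own (much terser) proof: fix an idempotent $p\in J(S_{0})$ with $D\in p$, pass to $D^{*}$, and at each inductive step intersect $D^{*}$ with the translates $w_{n_{1}}\left(a_{n_{1}}\right)\cdots w_{n_{l}}\left(a_{n_{l}}\right)^{-1}\left[D^{*}\right]$ to form $E\in p$, choose $A_{p+1}$ exactly as in Theorem \ref{Th 4}, and extract $w_{p+1}$ from the $J$-set $\bigcap_{a_{p+1}\in A_{p+1}}h_{a_{p+1}}^{-1}\left[E\right]$ inside $\tau^{-1}\left[A_{p+1}\right]$. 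The only cosmetic difference is that the $J$-set transfer you attribute partly to Lemma \ref{pws to J} is really the partial-semigroup transfer recorded in the Fact (as used in Theorem \ref{Th 4}), but the substance of your argument is the paper's.
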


\begin{proof}
As $D\subseteq S_{0}$ is a $C$-set, let us choose an idempotent
$p\in\delta\left(S_{0}\right)$ such that $D\in p$. Then, $D^{*}\in p$.
Now, proceeding as the above proof, assume, 
\[
E=D^{*}\cap\bigcap_{G=\left\{ n_{1},n_{2},\ldots,n_{i}\right\} \subseteq\left[1,p\right]}w_{n_{1}}\left(a_{n_{1}}\right)w_{n_{2}}\left(a_{n_{2}}\right)\ldots w_{n_{l}}\left(a_{n_{l}}\right)^{-1}\left[D^{*}\right]
\]
 and $A_{p+1}$ as similar as above.

Then, as $\bigcap_{a_{p+1}\in A_{p+1}}h_{a_{p+1}}^{-1}\left[E\right]$
is a $J$-set, we have the result. 
\end{proof}
In \cite[Theorem 2.5]{key-2}, authors proved \cite[Corollary 16]{key-6}
for $J$-sets. In the following theorem, we developed \cite[Theorem 2.5]{key-2}
for infinite sequence of alphabets.
\begin{thm}
Let $k,n\in\mathbb{N}$ with $k<n$ and let $T$ be the set of words
over $\left\{ v_{1},v_{2},\ldots,v_{k}\right\} $ in which $v_{i}$
occurs for each $i\in\left\{ 1,2,\ldots,k\right\} $. Given, $w\in S_{n}$,
let $\tau\left(w\right)$ be obtained from $w$ by deleting all occurrence
of elements of $A$ as well as occurrences of $v_{i}$, $k<i\leq n$.
Let $\left\langle y_{t}\right\rangle _{t=1}^{\infty}$ be a sequence
in $T$ and let $D\subseteq S_{0}$ is a $J$-set. Then there exists
a sequence of variable words $\left\langle w_{t}\right\rangle _{t=1}^{\infty}$
over $A$ such that $\left\{ w_{m}\left(a_{m}\right):a_{m}\in A_{m}^{n}\right\} \subseteq D$
for all $m\in\mathbb{N}$ and for every $G\in\mathcal{P}_{f}\left(\mathbb{N}\right)$,
\[
\prod_{i=1}^{i}\tau\left(w_{m_{i}}\right)\in\text{FP}\left(\left\langle y_{t}\right\rangle _{t=1}^{\infty}\right),
\]
 where $G=\left\{ m_{1}<m_{2}<\ldots<m_{l}\right\} $.
\end{thm}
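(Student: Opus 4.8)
The plan is to run the same inductive scheme as in Theorem~\ref{Th 4}, with the additive structure $\text{FS}(\langle x_n\rangle_{n=1}^\infty)$ in $\omega$ replaced by the (non-commutative) multiplicative structure $\text{FP}(\langle y_t\rangle_{t=1}^\infty)$ in the word semigroup $T$, and the counting homomorphism $w\mapsto|w|_{v_1}$ replaced by the projection $\tau$ that erases every letter of $A$ together with every occurrence of $v_i$ for $k<i\le n$. First I would extend $\tau$ to $S_n\cup S_0$ by the same deletion rule, so that $\tau\colon S_n\cup S_0\to T\cup\{\theta\}$ is a homomorphism with $\tau[S_0]=\{\theta\}$; thus $\tau$ is $S_0$-independent. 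Since every $w\in S_n$ contains each $v_i$ with $i\le k$, the word $\tau(w)$ still contains each such $v_i$, so $\tau(w)\in T$ for $w\in S_n$; that is, $\tau(w)\ne\theta$ precisely when $w\notin S_0$.

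Next I would manufacture the adequate partial semigroups feeding the induction. Writing $\tilde y_1=\theta$ and $\tilde y_{t+1}=y_t$, the set $\text{FP}(\langle\tilde y_t\rangle)=\text{FP}(\langle y_t\rangle)\cup\{\theta\}$ is an adequate partial semigroup under concatenation, where $u\ast v$ is declared defined exactly when the block of $\langle y_t\rangle$-indices used by $u$ lies entirely below the block used by $v$; more generally, for any $N$ the tail $\text{FP}(\langle y_t\rangle_{t>N})\cup\{\theta\}$ is adequate for the same reason, since one can always find indices beyond any finite set already used. By the Fact preceding Lemma~\ref{pws to J}, $\tau^{-1}$ carries such an adequate partial semigroup to an adequate partial semigroup, the set $\bigcap_{\vec a\in A_m^n}h_{\vec a}^{-1}[D]$ is a $J$-set in that pullback whenever $D\subseteq S_0$ is a $J$-set, and $S_0$ fails to be a $J$-set there.

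With these tools the extraction is routine. At stage one I take $F_1=\{h_{\vec a}:\vec a\in A_1^n\}$, intersect the $h_{\vec a}^{-1}[D]$ inside $\tau^{-1}[\text{FP}(\langle y_t\rangle)\cup\{\theta\}]$, and use that $S_0$ is not a $J$-set there to obtain $w_1\in S_n$ lying in $\bigcap_{\vec a\in A_1^n}h_{\vec a}^{-1}[D]$; then $w_1(\vec a)\in D$ for all $\vec a\in A_1^n$ and, since $w_1\notin S_0$, $\tau(w_1)\in\text{FP}(\langle y_t\rangle)$. Suppose $w_1,\dots,w_p$ have been chosen so that every product $\prod_i\tau(w_{m_i})$ with $\{m_1<\dots<m_l\}\subseteq\{1,\dots,p\}$ lies in $\text{FP}(\langle y_t\rangle)$. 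I would choose $N$ strictly larger than every $\langle y_t\rangle$-index occurring in any of these finitely many products, pass to $\tau^{-1}[\text{FP}(\langle y_t\rangle_{t>N})\cup\{\theta\}]$, and repeat the extraction with $F_{p+1}=\{h_{\vec a}:\vec a\in A_{p+1}^n\}$ to get $w_{p+1}\in S_n$ with $w_{p+1}(\vec a)\in D$ for all $\vec a\in A_{p+1}^n$ and $\tau(w_{p+1})\in\text{FP}(\langle y_t\rangle_{t>N})$.

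Finally I would verify the product condition. Because $\tau(w_{p+1})$ uses only indices above $N$ while each $\tau(w_{m_i})$ with $m_i\le p$ uses indices at most $N$, any product $\tau(w_{m_1})\cdots\tau(w_{m_l})$ with $m_l=p+1$ concatenates a head from $\text{FP}(\langle y_t\rangle_{t\le N})$ with $\tau(w_{p+1})$ and so lies in $\text{FP}(\langle y_t\rangle)$, while products with $m_l\le p$ are covered by the inductive hypothesis. This last step is where genuine care is needed, and it is the one feature distinguishing the argument from the commutative case of Theorem~\ref{Th 4}: since concatenation in $T$ is non-commutative one cannot rearrange factors, so the only way to keep $\prod_i\tau(w_{m_i})$ inside $\text{FP}(\langle y_t\rangle)$ is to force the $\langle y_t\rangle$-index blocks contributed by successive $w_m$ to appear in strictly increasing order. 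The tail construction $\text{FP}(\langle y_t\rangle_{t>N})$ imposed at each stage is exactly what enforces this ordering, and I expect it to be the main obstacle in the proof.
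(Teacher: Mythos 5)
Your proposal is correct and takes essentially the same route as the paper: extend $\tau$ to an $S_{0}$-independent map $\tau^{*}:S_{n}\cup S_{0}\rightarrow T\cup\left\{ \theta\right\} $, pull back $\text{FP}\left(\left\langle y_{t}\right\rangle _{t=1}^{\infty}\right)\cup\left\{ \theta\right\} $ to an adequate partial semigroup, extract $w_{1}$ from $S_{n}\cap\bigcap_{\vec{a}\in A_{1}^{n}}h_{\vec{a}}^{-1}\left[D\right]$ using the $J$-set fact, and iterate. The paper compresses the induction into ``proceeding as Theorem \ref{Th 4}''; your explicit tail construction $\text{FP}\left(\left\langle y_{t}\right\rangle _{t>N}\right)$ is exactly the non-commutative analogue of the translate step $-x+\text{FS}\left(\left\langle x_{n}\right\rangle _{n=1}^{\infty}\right)$ used there, so you have merely filled in the detail the paper leaves to the reader.
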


\begin{proof}
We will proceed same as in Theorem \ref{Th 4}.

Let $T^{*}=T\cup\left\{ \theta\right\} $ and $\tau^{*}:S_{n}\cup S_{0}\rightarrow T^{*}$
be defined by 
\[
\tau^{*}\left(w\right)=\left\{ \begin{array}{c}
\tau\left(w\right),\text{if }w\in S_{n}\\
\theta,\text{ if }w\in S_{0}
\end{array}\right..
\]

Then $\left(\tau^{*}\right)^{-1}\left[\text{FP}\left(\left\langle y_{t}\right\rangle _{t=1}^{\infty}\right)\cup\theta\right]$
is a partial semigroup in $S_{n}$.

Let $A_{1}=\left(\tau^{*}\right)^{-1}\left[\text{FP}\left(\left\langle y_{t}\right\rangle _{t=1}^{\infty}\right)\cup\theta\right]$
and $F_{1}=\left\{ h_{\vec{a_{1}}}:\vec{a_{1}}\in A_{1}^{n}\right\} $. 

Then, $\bigcap_{\vec{a_{1}}\in A_{1}^{n}}h_{\vec{a_{1}}}^{-1}\left[D\right]$
is a $J$-set in $S_{n}\cup S_{0}$ and so in $S_{n}$ as $S_{0}$
is not a $J$-set in $S_{n}\cup S_{0}$.

So, there exists $w_{1}\in S_{n}\cap\bigcap_{\vec{a_{1}}\in A_{1}^{n}}h_{\vec{a_{1}}}^{-1}\left[D\right]$
such that $\tau\left(w_{1}\right)\in\text{FP}\left(\left\langle y_{t}\right\rangle _{t=1}^{\infty}\right)$.
i.e; $w_{1}\left(\vec{a_{1}}\right)\in D$ for all $\vec{a_{1}}\in A_{1}^{n}$
and $\tau\left(w_{1}\right)\in\text{FP}\left(\left\langle y_{t}\right\rangle _{t=1}^{\infty}\right)$.

from here proceeding as Theorem \ref{Th 4}, we get the result.
\end{proof}
Similarly, we get the following corollary.
\begin{cor}
Let $k,n\in\mathbb{N}$ with $k<n$ and let $T$ be the set of words
over $\left\{ v_{1},v_{2},\ldots,v_{k}\right\} $ in which $v_{i}$
occurs for each $i\in\left\{ 1,2,\ldots,k\right\} $. Given, $w\in S_{n}$,
let $\tau\left(w\right)$ be obtained from $w$ by deleting all occurrence
of elements of $A$ as well as occurrences of $v_{i}$, $k<i\leq n$.
Let $\left\langle y_{t}\right\rangle _{t=1}^{\infty}$ be a sequence
in $T$ and let $C\subseteq S_{0}$ is a $C$-set. Then there exists
a sequence of variable words $\left\langle w_{t}\right\rangle _{t=1}^{\infty}$
over $A$ such that $\left\{ w_{m}\left(a_{m}\right):a_{m}\in A_{m}^{n}\right\} \subseteq C$
for all $m\in\mathbb{N}$ and for every $G\in\mathcal{P}_{f}\left(\mathbb{N}\right)$,
\[
\prod_{i=1}^{l}\tau\left(w_{m_{i}}\right)\in\text{FP}\left(\left\langle y_{t}\right\rangle _{t=1}^{\infty}\right),
\]
 where $G=\left\{ m_{1}<m_{2}<\ldots<m_{l}\right\} $.
\end{cor}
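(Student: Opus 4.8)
The plan is to run the construction of the preceding theorem, whose only use of the hypothesis on $D$ was that it is a $J$-set, and to feed it the sharper data supplied by the idempotent underlying a $C$-set; this is exactly the upgrade carried out in the corollary following Theorem \ref{Th 4}, now transported to the multivariable, FP-indexed setting. Since $C\subseteq S_0$ is a $C$-set, I would first fix an idempotent $p\in J(S_0)$ with $C\in p$. The one structural fact that does all the work is that $C^{*}=\{x\in C:x^{-1}C\in p\}$ again belongs to $p$, and that for every $x\in C^{*}$ one has $x^{-1}C^{*}\in p$. In particular $C^{*}$ is itself a $J$-set, so anywhere the previous proof invoked the $J$-set $D$ we may instead invoke $C^{*}$, and the genuine content of the $C$-set hypothesis (matching the earlier $C$-set corollaries) is that the concatenations, not merely the individual instances, are pushed into $C$.

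Next I would reproduce the scaffolding of the preceding theorem verbatim: set $T^{*}=T\cup\{\theta\}$, define $\tau^{*}:S_n\cup S_0\to T^{*}$ by $\tau^{*}(w)=\tau(w)$ for $w\in S_n$ and $\tau^{*}(w)=\theta$ for $w\in S_0$, and note that $(\tau^{*})^{-1}[\text{FP}(\langle y_t\rangle_{t=1}^{\infty})\cup\{\theta\}]$ is an adequate partial subsemigroup of $S_n$. For the base step, take $A_1=(\tau^{*})^{-1}[\text{FP}(\langle y_t\rangle_{t=1}^{\infty})\cup\{\theta\}]$ and $F_1=\{h_{\vec{a_1}}:\vec{a_1}\in A_1^{n}\}$. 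Because $C^{*}$ is a $J$-set, $\bigcap_{\vec{a_1}\in A_1^{n}}h_{\vec{a_1}}^{-1}[C^{*}]$ is a $J$-set in $S_n\cup S_0$, and since $S_0$ is not a $J$-set there (the opening Fact), it meets $S_n$; any $w_1$ in the intersection satisfies $w_1(\vec{a_1})\in C^{*}$ for all $\vec{a_1}\in A_1^n$ together with $\tau(w_1)\in\text{FP}(\langle y_t\rangle_{t=1}^{\infty})$.

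The inductive step is where the $C^{*}$ trick is genuinely needed, and it is the step I expect to be the main obstacle. Assuming $w_1,\dots,w_p$ chosen so that every admissible concatenation lands in $C^{*}$ and the corresponding $\tau$-products lie in $\text{FP}(\langle y_t\rangle_{t=1}^{\infty})$, I would pass to
\[
E=C^{*}\cap\bigcap_{\{m_1<\cdots<m_l\}\subseteq[1,p]}\bigl(w_{m_1}(\vec{a_{m_1}})\cdots w_{m_l}(\vec{a_{m_l}})\bigr)^{-1}[C^{*}].
\]
Each block $w_{m_1}(\vec{a_{m_1}})\cdots w_{m_l}(\vec{a_{m_l}})$ lies in $C^{*}$ by the inductive hypothesis, so its preimage $(\cdot)^{-1}[C^{*}]$ belongs to $p$; as $[1,p]$ is finite this is a finite intersection of members of $p$, whence $E\in p$ and $E$ is a $J$-set. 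Choosing the auxiliary alphabet $A_{p+1}$ exactly as in Theorem \ref{Th 4} so that the next $\tau$-value extends the FP-structure, the set $\bigcap_{\vec{a_{p+1}}\in A_{p+1}^{n}}h_{\vec{a_{p+1}}}^{-1}[E]$ is again a $J$-set in $S_n\cup S_0$ and therefore meets $S_n$; any $w_{p+1}$ there multiplies each earlier block back into $C^{*}$, extends every admissible concatenation into $C^{*}\subseteq C$, and keeps the $\tau$-products inside $\text{FP}(\langle y_t\rangle_{t=1}^{\infty})$. The delicate point throughout is the simultaneous bookkeeping: one must verify that appending $w_{p+1}$ respects the ordering $m_1<\cdots<m_l$, that it lands on the right of each previously formed block so that $x^{-1}C^{*}\in p$ is the relevant translate, and that the FP-indexing of the $\tau$-values is preserved. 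Everything else is a routine transcription of the $J$-set argument, with the idempotent hypothesis entering solely through the identity $E\in p$.
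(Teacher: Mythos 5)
Your proposal is correct and is essentially the paper's own argument: the paper's proof is just the one line ``proceeding same as in Theorem \ref{Th 4}'', and your fleshed-out construction --- fix an idempotent $p\in J(S_{0})$ with $C\in p$, replace $C$ by $C^{*}$, and at each stage intersect with the translates $\left(w_{m_{1}}\left(\vec{a}_{m_{1}}\right)\cdots w_{m_{l}}\left(\vec{a}_{m_{l}}\right)\right)^{-1}\left[C^{*}\right]$, all of which lie in $p$ --- is exactly that intended upgrade, mirroring the paper's earlier $C$-set corollary following Theorem \ref{Th 4}. One remark: the corollary as literally stated (individual instances in $C$ plus the FP condition, with no concatenation clause) already follows trivially from the preceding theorem because every $C$-set is by definition a member of an idempotent in $J(S_{0})$ and hence a $J$-set; your extra idempotent bookkeeping is what buys the stronger, evidently intended, conclusion that the concatenations also land in $C$.
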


\begin{proof}
Proceeding same as in Theorem \ref{Th 4}, we get the result.
\end{proof}
Similar to \cite[Lemma 17]{key-6}, in Theorem \ref{lem 17}, the
semigroup $T$ and the matrix $M$ satisfy all the appropriate hypotheses
to matrix multiplication makes sense and distributive.
\begin{thm}
\label{lem 17} Let $\left(T,+\right)$ be a commutative semigroup
with identity element $0$. Let $k,m,n\in\mathbb{N}$, and $M$ be
a $k\times m$ matrix. For $i\in\left\{ 1,2,\ldots,m\right\} $, let
$\tau_{i}$ be an $S_{0}$- independent homomorphism from $S_{n}$
to $T$. Define a function $\psi$ on $S_{n}$ by
\[
\psi\left(w\right)=\left(\begin{array}{c}
\tau_{1}\left(w\right)\\
\tau_{2}\left(w\right)\\
\vdots\\
\tau_{m}\left(w\right)
\end{array}\right),
\]
 with the property that for any collection of IP-sets $\left\{ C_{i}:i\in\left\{ 1,2,\ldots,k\right\} \right\} $
in $T$, there exists $a\in S_{n}$ such that $M\psi\left(a\right)\in\times_{i=1}^{k}C_{i}$.
Let $B_{i}=\text{FS}\left(\left\langle x_{n}^{\left(i\right)}\right\rangle _{n=1}^{\infty}\right)$
for $1\leq i\leq k$ be $k$ IP sets in $T$ and $D\subseteq S_{0}$
is a $C$-set in $S_{0}$, then, there exists $\left\langle w_{i}\right\rangle _{i=1}^{\infty}\subseteq S_{n}$
such that for each $\left\{ m_{1},m_{2},\ldots,m_{l}\right\} =G\in\mathcal{P}_{f}\left(\mathbb{N}\right)$,
we have 
\[
w_{m_{1}}\left(\overrightarrow{a_{m_{1}}}\right)w_{m_{2}}\left(\overrightarrow{a_{m_{2}}}\right)\cdots w_{m_{l}}\left(\overrightarrow{a_{m_{l}}}\right)\in D
\]
 and 
\[
M\circ\psi\left(w_{m_{1}}w_{m_{2}}\ldots w_{m_{l}}\right)\in\times_{i=1}^{k}B_{i}
\]
.
\end{thm}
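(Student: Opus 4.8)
The plan is to build the sequence $\langle w_i\rangle_{i=1}^{\infty}$ by a single induction that simultaneously controls two features: the membership of the products of instances in $D$, handled exactly as in Theorem \ref{central} through the $C$-set machinery, and the location of $M\circ\psi$ of the products inside $\times_{i=1}^{k}B_i$, handled by the shift-and-tail device of Theorem \ref{Th 4} adapted to the vector-valued matrix setting. Since $D$ is a $C$-set, first fix an idempotent $p\in J(S_0)$ with $D\in p$; then $D^{*}\in p$, and indeed $C^{*}\in p$ for every $C\in p$, while every member of $p$ is a $J$-set because $p\in J(S_0)$.

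The first point is that $M\circ\psi$ behaves as a homomorphism. Because each $\tau_i$ is an $S_0$-independent homomorphism, $\psi$ extends to $S_n\cup S_0$ with $\psi[S_0]=\{0\}$, and distributivity of $M$ gives $M\psi(uw)=M\psi(u)+M\psi(w)$; in particular $M\psi(w_{m_1}w_{m_2}\ldots w_{m_l})=\sum_{j=1}^{l}M\psi(w_{m_j})$ and $M\psi[S_0]=\{0\}$. Thus keeping the products of $M\psi$-values inside $\times_{i=1}^{k}B_i$ amounts to keeping the finite sums of the individual vectors $M\psi(w_{m_j})$ inside $\times_{i=1}^{k}B_i$. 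Writing $B_i=\text{FS}(\langle x_n^{(i)}\rangle_{n=1}^{\infty})$, at stage $l+1$ I choose for each $i$ a tail IP-set $C_i^{(l+1)}=\text{FS}(\langle x_n^{(i)}\rangle_{n\ge N_i})\subseteq B_i$ with $N_i$ so large that every element of $C_i^{(l+1)}$ added to any partial sum of $i$-th coordinates produced at earlier stages still lies in $B_i$; this is precisely the intersection-of-shifts condition $C_i^{(l+1)}\subseteq\bigcap_{y}(-y+B_i)$ used in Theorem \ref{Th 4}.

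The two controls are combined on a restricted partial semigroup. Put $P_{l+1}=(M\psi)^{-1}\big[(\times_{i=1}^{k}C_i^{(l+1)})\cup\{0\}\big]$. Since $M\psi$ is a homomorphism vanishing on $S_0$ and each $C_i^{(l+1)}$ is an FS-set, $P_{l+1}$ is a partial subsemigroup of $S_n\cup S_0$ containing $S_0$, and the matrix hypothesis, applied to the IP-sets $C_i^{(l+1)}$, guarantees that $P_{l+1}$ actually contains variable words and is adequate. As in Theorem \ref{central}, let $E\in p$ be the $C$-set obtained from $D^{*}$ by intersecting with the inverse shifts $(w_{m_1}(\vec{a}_{m_1})\cdots w_{m_l}(\vec{a}_{m_l}))^{-1}[D^{*}]$ over all products already formed; $E\in p$ shows that $E$ is a $J$-set. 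Applying the Fact with $T=P_{l+1}$, $S=S_0$ and the maps $F=\{h_{\vec{a}}:\vec{a}\in A_{l+1}^{n}\}$, which restrict to the identity on $S_0$, the set $\bigcap_{\vec{a}\in A_{l+1}^{n}}h_{\vec{a}}^{-1}[E]$ is a $J$-set in $P_{l+1}$; and since $S_0$ is not a $J$-set in $P_{l+1}$, there is a genuine variable word $w_{l+1}\in S_n\cap P_{l+1}\cap\bigcap_{\vec{a}\in A_{l+1}^{n}}h_{\vec{a}}^{-1}[E]$. By construction $w_{l+1}(\vec{a})\in E\subseteq D^{*}$ for every $\vec{a}\in A_{l+1}^{n}$, so all products of instances through level $l+1$ remain in $D^{*}\subseteq D$, while $M\psi(w_{l+1})\in\times_{i=1}^{k}C_i^{(l+1)}$ forces every new finite sum into $\times_{i=1}^{k}B_i$. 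The induction then produces the required infinite sequence.

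The main obstacle is this combining step, namely verifying that $P_{l+1}$ is an adequate partial semigroup in which $S_0$ is not a $J$-set, so that the $J$-set supplied by the Fact contains an actual variable word and not merely elements of $S_0$. This is exactly where the hypothesis that $M\psi$ meets every product $\times_{i=1}^{k}C_i$ of IP-sets is essential: applied to the tails $C_i^{(l+1)}$ and iterated so as to obtain variable words with pairwise disjoint contributions, it furnishes enough members of $P_{l+1}\cap S_n$ to force adequacy and the failure of $S_0$ to be a $J$-set there. Everything else is the bookkeeping already carried out in Theorems \ref{central} and \ref{Th 4}.
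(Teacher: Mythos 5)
Your proposal follows essentially the same route as the paper's own proof: both treat $M\psi$ as a homomorphism vanishing on $S_{0}$, pull back the (shifted/tail) product IP set to obtain a partial semigroup containing $S_{0}$, use the $J$-set machinery with the maps $h_{\vec{a}}$ to extract a variable word all of whose instances lie in $D^{*}$ intersected with the shifts by previously built products, and iterate with both the $D^{*}$-shifts and the IP-set shifts updated at each stage. If anything, you are more explicit than the paper on the two points it glosses over, namely why the pulled-back partial semigroup is adequate with $S_{0}$ failing to be a $J$-set in it (your appeal to the matrix hypothesis on products of IP-sets), and how coordinatewise tail IP-sets realize the shift condition that the paper states only as choosing an IP set inside $-M\psi\left(w_{1}\right)+B$.
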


\begin{proof}
Let $\phi:S_{n}\cup S_{0}\rightarrow B=\times_{i=1}^{k}\left(B_{i}\cup\left\{ 0\right\} \right)$,
where $\phi\left(w\right)=M\psi\left(w\right)$, so as in \cite[Lemma 17]{key-6},
$\phi$ is a homomorphism. Now, let us choose $p\in\beta S_{0}$ such
that $D\in p$ so then $D^{*}\in p$.

Now, $h_{\vec{a_{1}}}:\phi^{-1}\left[B\right]\rightarrow S_{0}$,
$\vec{a_{1}}\in A_{1}^{n}$ is a collection of partial semigroup homomorphism
fixed on $S_{0}$.

Now, $\bigcap_{\vec{a_{1}}\in A_{1}^{n}}h_{\vec{a_{1}}}^{-1}\left[D^{*}\right]\setminus S_{0}$
is a $J$-set in $\phi^{-1}\left[B\right]$ and so there exists $w_{1}\in\phi^{-1}\left[B\right]\setminus S_{0}$
such that $w_{1}\left(\vec{a_{1}}\right)\in D^{*}\forall\vec{a_{1}}\in A_{1}^{n}$
and $\phi\left(w_{1}\right)=M\psi\left(w_{1}\right)\in B$.

Now, take an IP set $B_{1}\subseteq-M\psi\left(w_{1}\right)+B$ and
clearly $\left(0,0,\ldots,0\right)\in B_{1}$. So, $S_{0}\subseteq\phi^{-1}\left[B_{1}\right]$.
Now, $\phi^{-1}\left(B_{1}\right)$ is again a partial semigroup,
and hence 
\[
h_{\vec{a_{2}}}:\phi^{-1}\left[B_{1}\right]\rightarrow E=D^{*}\cap\bigcap_{\vec{a_{1}}\in A_{1}^{n}}w_{1}\left(\vec{a_{1}}\right)^{-1}D^{*},\ \vec{a_{2}}\in A_{2}^{n}
\]
 is again a partial semigroup homomorphism fixed on $S_{0}$.

So, there exists $w_{2}\in\bigcap_{\vec{a_{2}}\in A_{2}^{n}}h_{\vec{a_{2}}}^{-1}\left[E\right]\cap\left(\phi^{-1}\left[B_{1}\right]\setminus S_{0}\right)$.
Thus, 
\[
h_{\vec{a_{2}}}\left(w_{2}\right)\in E=D^{*}\cap\bigcap_{\vec{a_{1}}\in A_{1}^{n}}w_{1}\left(\vec{a_{1}}\right)^{-1}D^{*}
\]
 for all $\vec{a_{2}}\in A_{2}^{n}$. Thus, $w_{2}\left(\vec{a_{2}}\right)\in D^{*}$,
$w_{1}\left(\vec{a_{1}}\right)=w_{2}\left(\vec{a_{2}}\right)\in D^{*}$
and $\phi\left(w_{2}\right)=M\psi\left(w_{2}\right)\in B_{1}$.

So, we have, 
\[
\left.\begin{array}{c}
w_{1}\left(\vec{a_{1}}\right)\in D^{*}\text{ for all }\vec{a_{1}}\in A_{1}^{n}\\
M\psi\left(w_{1}\right)\in B
\end{array}\right\} ----\left(\text{1}\right),
\]
\[
\left.\begin{array}{c}
w_{2}\left(\vec{a_{2}}\right)\in D^{*}\text{ for all }\vec{a_{2}}\in A_{1}^{n}\\
M\psi\left(w_{2}\right)\in B_{1}\subseteq-M\psi\left(w_{1}\right)+B
\end{array}\right\} ----\left(\text{2}\right),
\]

\[
\left.\begin{array}{c}
\begin{array}{c}
w_{1}\left(\vec{a_{1}}\right)w_{2}\left(\vec{a_{2}}\right)\in D^{*}\text{ for all }\vec{a_{1}}\in A_{1}^{n}\text{ and }\vec{a_{2}}\in A_{2}^{n}\\
M\psi\left(w_{2}\right)\in-M\psi\left(w_{1}\right)+B
\end{array}\end{array}\right\} ----\left(\text{3}\right)
\]
 and so $M\circ\psi\left(w_{1}w_{2}\right)=M\left(\psi\left(w_{1}\right)+\psi\left(w_{2}\right)\right)\in B$.

Similarly, by proceeding further with this iteration, we will get
the result.
\end{proof}
\textbf{Acknowledgment:} The second author of the paper acknowledges
the grant UGC-NET SRF fellowship with id no. 421333 of CSIR-UGC NET
December 2016. We would like to thank Prof. Dibyendu De for his helpful
comments on this paper.

\end{document}